\newcommand{\lng}[1]{\ell({#1})}
\newcommand{\lngs}[1]{\lng{\itvs{#1}}}
\newcommand{\lngt}[2]{\lng{\itvt{#1}{#2}}}
\newcommand{\lngi}[1]{\lngt{#1}{i}}
\newcommand{\lngcs}[1]{\lng{\cuts{#1}}}
\newcommand{\lngct}[2]{\lng{\cutt{#1}{#2}}}
\newcommand{\lngc}[1]{\lngct{#1}{i}}
\newcommand{\ratiow}{x}
\newcommand{\ratio}[1]{\ratiow_{#1}}
\newcommand{\dratiow}{{\rm dr}}
\newcommand{\dratio}[1]{\dratiow(#1)}
\newcommand{\cutw}{g}
\newcommand{\cudw}{\cutw'}
\newcommand{\itvw}{I}
\newcommand{\itvs}[1]{\itvw_{#1}}
\newcommand{\itvt}[2]{\itvw_{#2_{#1}}^{#1}}
\newcommand{\itvi}[1]{\itvt{#1}{i}}
\newcommand{\itvb}[2]{\itvw_{#2}^{#1}}
\newcommand{\cut}[1]{\cutw(#1)}
\newcommand{\cud}[1]{\cudw(#1)}
\newcommand{\cuts}[1]{\cut{\itvs{#1}}}
\newcommand{\cutt}[2]{\cutw^{#1}(\itvt{#1}{#2})}
\newcommand{\cutb}[2]{\cutw^{#1}(\itvb{#1}{#2})}
\newcommand{\cuti}[1]{\cutt{#1}{i}}
\newcommand{\NCF}{\operatorname{NICF}}
\newcommand{\NF}{\NCF_5}
\newcommand{\NFS}{\NCF_5^*}
\newcommand{\CNF}{C_{\NCF}}
\newcommand{\lbrf}{\langle}
\newcommand{\rbrf}{\rangle}
\newcommand{\NFR}{\NCF_4}
\newcommand{\NFRX}{\NCF_{4,\setminus \QQ}}
\newcommand{\NFRY}{\NCF_{4,\QQ}}
\newcommand{\bara}{{\bar a}}
\newtheorem{theorem}{Theorem}
\newtheorem{lemma}[theorem]{Lemma}
\newtheorem{corollary}[theorem]{Corollary}
\newtheorem{proposition}[theorem]{Proposition}
\newtheorem{remark}[theorem]{Remark}
\newtheorem{definition}[theorem]{Definition}
\newcommand{\NICF}{{\rm NICF}}
\newcommand{\RCF}{{\rm RCF}} 
\newcommand{\RR}{\ensuremath{\mathbb{R}}}     
\newcommand{\QQ}{\ensuremath{\mathbb{Q}}}     
\newcommand{\ZZ}{\ensuremath{\mathbb{Z}}}  
\DeclareMathOperator\sign{sign}
\def\Z{{\mathbb Z}}
\def\R{{\mathbb R}}
\def\calC{{\cal C}}
\def\calD{{\cal D}}
\begin{document}
\title{Sums of nearest integer continued fractions with bounded digits:
$\NICF_5 + \NICF_5 = \R$}
\author{Wieb Bosma and Alex Brouwers\\ \\
{\it Department of Mathematics, Radboud Universiteit Nijmegen, NL}\\
\\
email: {\tt bosma@math.ru.nl} and {\tt alex.wm.brouwers@gmail.com}}
\maketitle

\section{Introduction}\label{sec:intro}
Slightly dependent on one's perspective, there are two
main defects that continued fractions suffer as an
alternative (to the decimal, or binary) representation of real numbers: they
require an infinite alphabet for the `digits', and arithmetic is problematic.

In the following theorem of Hall, from 1947, one could say that the first
defect is shifted to the second.

\begin{theorem}[Hall \cite{Hall}, 1947]\label{thm:hall}
For every real number $x$ there exist real numbers $u, v$
such that $x=u+v$ and both $u, v$ have the property that
their regular partial fractions $b_i$ are bounded by 4
(with the possible exception of $b_0$).
\end{theorem}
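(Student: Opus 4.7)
The plan is to prove the stronger statement that the Cantor set
\[
F_4 = \{\alpha \in [0,1] : \alpha = [0;a_1,a_2,\ldots] \text{ with all } a_i \le 4\}
\]
satisfies $F_4 + F_4 \supseteq J$ for some interval $J$ of length at least $1$. Granted this, Hall's theorem follows easily: given $x\in\RR$, first choose $y\in J$ with $x\equiv y\pmod 1$, then write $y = u_0 + v_0$ with $u_0,v_0\in F_4$, and finally absorb the integer difference $x-y\in\ZZ$ into the leading partial quotient of either summand. Only the integer part $b_0$ is altered in this last step, so the bound $b_i\le 4$ for $i\ge 1$ is preserved.

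First I would unfold the Cantor-set structure of $F_4$. The Gauss map $T(\alpha)=\{1/\alpha\}$ gives a natural stratification: at level $n$, the set $F_4$ is contained in a disjoint union of cylinder intervals $\Delta(a_1,\ldots,a_n)$ indexed by $(a_1,\ldots,a_n)\in\{1,2,3,4\}^n$, consisting of the closures of the sets of $\alpha$ whose continued fraction begins $[0;a_1,\ldots,a_n,\ldots]$. Each cylinder at level $n$ is refined at level $n+1$ into four retained subintervals (for $a_{n+1}\in\{1,2,3,4\}$) together with one excised open gap (corresponding to $a_{n+1}\ge 5$). Composing the Möbius maps $\alpha\mapsto 1/(a_k+\alpha)$, explicit formulas in terms of the convergents $p_n/q_n$ give the exact lengths of both the kept subintervals and the excised gap.

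The heart of the proof is a gap lemma. Let $F_4^{(n)}$ denote the union of the $4^n$ cylinders at level $n$. By induction on $n$ I would show that $F_4^{(n)}+F_4^{(n)}$ is a single closed interval. The inductive step reduces to the geometric fact that when one cylinder is refined, the excised gap has length at most the length of each adjacent retained subinterval. This is the now-classical Newhouse gap condition, and a direct computation with the Möbius formulas shows it holds uniformly for the Cantor set with digit bound $4$; a refinement shows it fails for bound $3$, accounting for the sharpness of Hall's constant.

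The main obstacle is verifying this uniform gap-to-subinterval inequality across all cylinders and all levels. By the homogeneity of the Möbius action, the ratio of gap length to adjacent subinterval length depends only on the continued-fraction tail attached to the cylinder, and bounded distortion of Möbius maps reduces the verification to a finite check. Once the gap condition is established, a standard compactness argument transfers the conclusion from the approximations to the limit: for any $x$ in the common interval and any $n$, one picks $u_n,v_n\in F_4^{(n)}$ with $u_n+v_n=x$, and extracts subsequential limits $u,v\in F_4$ with $u+v=x$. This yields $F_4+F_4\supseteq J$ and completes the proof.
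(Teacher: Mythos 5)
The paper does not give its own proof of Hall's theorem; it cites \cite{Hall} and develops analogous Cantor-set machinery (Sections~\ref{sec:cantor}--\ref{sec:equal}) only for the NICF case. Your high-level strategy -- find an interval $J$ of length $\ge 1$ with $F_4+F_4\supseteq J$, then absorb the integer difference into $b_0$, using a Cantor-set gap argument plus compactness -- is indeed the Hall/Hlavka template that the paper generalises. But there is a genuine gap in the way you set up the gap lemma.

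You claim that when a cylinder $\Delta(a_1,\ldots,a_n)$ is refined, ``the excised gap has length at most the length of each adjacent retained subinterval,'' and that this is a finite, uniformly checkable Newhouse-type condition. That claim is false for the decomposition you describe. The excised set $\bigcup_{k\ge5}\Delta(a_1,\ldots,a_n,k)$ is not a \emph{middle} gap: the subcylinders $\Delta(a_1,\ldots,a_n,k)$ accumulate monotonically at the endpoint $p_n/q_n$, so the gap is a boundary notch and its only neighbour among retained subcylinders is $\Delta(a_1,\ldots,a_n,4)$. Their lengths are $\frac{1}{q_n(5q_n+q_{n-1})}$ and $\frac{1}{(4q_n+q_{n-1})(5q_n+q_{n-1})}$ respectively, so the gap exceeds the adjacent retained subcylinder by a factor of $\frac{4q_n+q_{n-1}}{q_n}\ge 4$. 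The ``finite check'' you defer to would therefore fail. What actually rescues the argument is that the boundary notch of $\Delta(a_1,\ldots,a_n)$ (at the side accumulating to $p_n/q_n$) lies flush against the corresponding notch of a \emph{different} cylinder on the other side of $p_n/q_n$, and only the \emph{combined} gap sits in the middle of something with well-balanced remainders; equivalently, one must reorganise the nesting order of the cuts (this is exactly what the paper's Proposition~\ref{holedecrease} -- the hole-decreasing rearrangement -- is for) or, as the paper does for $\NICF$, design a bespoke gap function on carefully typed intervals $T_{u,v}$ rather than excising $a_{n+1}\ge5$ in one go. Your compactness step and the mod-$1$ reduction are fine, but without fixing the Cantor-set decomposition the central inequality you rely on does not hold.
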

The partial fractions $b_i=b_i(x)$ are integers in the
regular continued fraction expansion (RCF) of a real number $x$ as
$$\RCF(x)=b_0+\frac{1}{\displaystyle\strut b_1+\frac{1}{\displaystyle\strut b_2+\frac{1}{\displaystyle \ddots}}},$$
and they satisfy
$b_0\in\Z$ and $b_i\geq 1$ (for $i\geq 1$).

The statement of Hall's theorem is often summarized as
$$\RR=\RCF_4+\RCF_4.$$
Much later, in 1973, Cusick \cite{Cusick} and Divis \cite{Divis} showed that
a symmetric improvement on this theorem is not possible, that
is to say, in analogous fashion:
$$\RR\neq\RCF_3+\RCF_3;$$
but Hlavka \cite{Hlavka} showed in 1975 that
$$\RR=\RCF_3+\RCF_4.$$
The current paper is concerned with similar results for
the nearest integer continued fraction expansion (NICF)
of real numbers. In some respects the NICF is `better'
than the RCF; it selects the best approximations and converges
faster, but it is also more natural in the sense that it
allows an obvious generalization to continued fraction
expansions of complex numbers, namely the Hurwitz
expansion. See also \cite{BosmaGruenewald}.

The easiest definition of both regular and nearest integer
continued fractions is by the algorithm used to obtain the
expansions. The RCF expansion
for a real number $x$ can be obtained by taking $x_0=x$
and iterating the steps
$b_i=\lfloor x_i\rfloor$  and $x_{i+1}=\frac{1}{x_i-b_i}$
for $i\geq 0$, as long as $b_i\neq x_i$. It provides essentially
unique expansions for every real number (unique if we insist that
finite expansions, for rational numbers, do not end with $1$),
and every infinite sequence $[b_0; b_1, b_2, \cdots]$ with
$b_i\in\Z$ satisfying $b_i\geq 1$ for $i\geq 1$ occurs as the
RCF expansion of a real number.

The NICF expansion is
obtained by replacing $b_i=\lfloor x_i\rfloor$ by $a_i=\lfloor
x_i\rceil$, which is the integer nearest to $x_i$. An important
difference with RCF is that in the NICF the $x_{i}$, and hence the
$a_i$, may become negative,
so in
$$\NICF(x)=a_0+\frac{1}{\displaystyle\strut a_1+\frac{1}{\displaystyle\strut a_2+\frac{1}{\displaystyle \ddots}}},$$
we have $a_i\in\Z$ for $i\geq 1$;  however,
the following conditions for $i\geq 1$ describe precisely the sequences
of integers that arise as NICF expansions:
\begin{itemize}
\item $a_{i} \in \ZZ \setminus \{-1, 0, 1\}$;
\item if $a_{i} = 2$, then $a_{i+1} \ge 2$;
\item if $a_{i} = -2$, then $a_{i+1} \le -2$.
\end{itemize}
From here on, digits $a_i$ always refer to the nearest integer expansion.

The main result in this paper can be summarized as follows:
\begin{equation}\RR=\NICF_5+\NICF_5.\label{eq:sum}\end{equation}
More formally, the following analogon to Hall's theorem will be proved.
\begin{theorem}\label{maintheorem}
For every real number $x$ there exist real numbers $u, v$
such that $x=u+v$ and both $u, v$ have the property that
their nearest integer partial fractions $a_i$ are bounded by 5
(with the possible exception of $a_0$).
\end{theorem}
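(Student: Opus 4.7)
The approach is a Hall-type Cantor-set argument adapted to the NICF. Let $F\subset[-\tfrac12,\tfrac12]$ denote the set of real numbers $x$ with $a_0(x)=0$ and $|a_i(x)|\le 5$ for all $i\ge 1$. Because the digit $a_0$ is unconstrained in the definition of $\NICF_5$, every element of $\NICF_5$ can be written uniquely as $n+y$ with $n\in\ZZ$ and $y\in F$. Consequently, Theorem~\ref{maintheorem} is equivalent to the assertion
\[
(F+F)+\ZZ=\RR,
\]
so it suffices to show that the projection of $F+F$ to $\RR/\ZZ$ is surjective; in particular, it is enough to exhibit an interval of length at least $1$ contained in $F+F$.

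Construct $F$ as the descending intersection $F=\bigcap_{n\ge 0}F^{(n)}$, where
\[
F^{(n)}=\bigcup I_{a_1,\ldots,a_n}, \qquad I_{a_1,\ldots,a_n}=\{x\in[-\tfrac12,\tfrac12]:a_i(x)=a_i,\,1\le i\le n\},
\]
the union ranging over admissible sequences with $|a_i|\le 5$ (respecting the NICF's $\pm 2$-rules). Each cylinder $I_{a_1,\ldots,a_n}$ is a closed sub-interval of $[-\tfrac12,\tfrac12]$ whose endpoints are Möbius images of $\pm\tfrac12$ under the fractional-linear transformation associated to $(a_1,\ldots,a_n)$, and its length is a simple rational function of the convergent denominators $q_{n-1},q_n$. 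Under refinement, each parent cylinder is partitioned by the sub-intervals indexed by admissible $a_{n+1}$; those with $|a_{n+1}|\ge 6$ form the \emph{gap regions} removed in passing from $F^{(n)}$ to $F^{(n+1)}$.

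The heart of the proof is the verification, at every parent cylinder and for every gap $G$ inside it flanked by surviving left and right children $A$ and $B$, of the Hall-type inequality
\[
\ell(G)\le \ell(A)+\ell(B),
\]
together with an analogous endpoint condition at the outermost surviving children of each parent. By the chaining argument underlying the classical proof of Theorem~\ref{thm:hall} (the standard gap lemma for sums of Cantor sets), such inequalities propagate through all levels and force $F+F$ to contain an interval of length at least $1$, so that $(F+F)+\ZZ=\RR$. The principal obstacle is the NICF's asymmetric admissibility rules: the prohibition that $a_i=2$ forces $a_{i+1}\ge 2$ (and symmetrically for $-2$) means the set of admissible children depends on the parent's last digit $a_n$. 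The gap inequalities must therefore be analyzed case by case on the value and sign of $a_n$; after the Möbius normalization that sends the parent cylinder back to $[-\tfrac12,\tfrac12]$, each case reduces to a finite collection of elementary inequalities between fractions of the form $\tfrac{1}{k+\theta}$ with $k\in\{\pm 2,\ldots,\pm 6\}$ and $\theta$ ranging over an explicit sub-interval parametrising the image of the one-step shift. Verifying this finite but bulky list of inequalities, distributed over the cases $a_n\in\{2,-2,\pm 3,\pm 4,\pm 5\}$, constitutes the technical core of the paper and completes the proof.
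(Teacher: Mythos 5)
Your framing of the reduction is sound: decomposing $\NICF_5 = \ZZ + F$ with $F\subset[-\tfrac12,\tfrac12]$ and reducing the theorem to showing $F+F$ contains an interval of length $1$ is essentially what the paper does (the paper works with a translate $\NFS\subset[0,1]$, but this is cosmetic). The fatal problem is in the Cantor-set construction you propose. If the gap removed at a parent cylinder $I_{a_1,\ldots,a_n}$ (with $a_n\ge 3$, say) is the region where $|a_{n+1}|\ge 6$, then in the normalized coordinate $\theta=x_n-a_n$ that gap occupies roughly $|\theta|<\tfrac{2}{11}$, of length about $0.36$, while the two flanking surviving children ($a_{n+1}=5$ and $a_{n+1}=-5$) occupy roughly $\theta\in\pm\left[\tfrac{2}{11},\tfrac{2}{9}\right]$, each of length about $0.04$. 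Thus $\ell(G)\gg\ell(A)+\ell(B)$, and your ``Hall-type inequality'' fails badly at every single parent cylinder. (It fails even more badly when $a_n=\pm 2$, where the gap sits at an edge of the parent and is flanked on one side only.) No amount of case analysis on $a_n$ will rescue this: the naive one-step digit refinement simply does not give a Cantor set with the density needed, and the ``standard gap lemma for sums of Cantor sets'' cannot be invoked.

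The paper's actual proof sidesteps this in two ways you do not anticipate. First, it restricts to the sub-Cantor-set $\NFS$ where a digit $\pm 5$ must be followed by a digit of the opposite sign; this is what makes the endpoints such as $[a_0;a_1,\ldots,a_n,k:\pm\mu]$ (with $\mu=\tfrac{5+\sqrt{21}}{2}=[5;\overline{-5,5}]$) land inside the set. Second, and more importantly, the gap function is \emph{not} the one-step cylinder refinement: each split removes only one small gap (e.g.\ $T_{2,5}$ splits into $T_{2,3}$ and $T_{3,5}$ by excising a thin gap around the $a_{n+1}=3$ boundary), so that passing from one digit level to the next is spread over several Cantor-tree generations. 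This interleaved ``staircase'' construction (Table~\ref{table:gaps} and Figure~\ref{fig:gap} in the paper) is what achieves density ratio $>1.06$ everywhere, which together with the initial-interval length $\sqrt{21}-4>0.58$ feeds into Theorem~\ref{usefulCantor}. Proving that this hand-built Cantor set $\CNF$ coincides with $\NFS\setminus\QQ$ (Theorems~\ref{thm:sub} and~\ref{thm:sup}) is itself a nontrivial piece of the argument that your outline omits. A further, smaller, inaccuracy: the inequality $\ell(G)\le\ell(A)+\ell(B)$ you state is not the right form of Hall's condition for a sum of two copies of the \emph{same} Cantor set; what is needed (cf.\ Theorem~\ref{compareanddivideTheorem} with $n=2$, $x_1=x_2$) amounts to each surviving child being at least as long as each gap it must absorb from the other summand, which with equal density ratios forces $\dratiow\ge 1$, a strictly stronger requirement than your additive inequality.
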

We will also prove that
\begin{equation}\RR\neq\NICF_4+\NICF_4.\label{eq:not}\end{equation}
Our result is obtained essentially by adapting the
Cantor set methods of Hall and Hlavka directly
to nearest integer continued fractions. This is explained
in Section \ref{sec:cantor} and Section \ref{sec:compdiv}.

In Section \ref{sec:nicf} we create a Cantor set, $\CNF$, and prove 
in Section \ref{sec:equal} it is equal to $\NFS$,
which is a subset of $\NF$.
With the results of Section \ref{sec:cantor} and Section \ref{sec:compdiv}, we 
show in Section \ref{sec:equal} that $\CNF + \CNF$ contains an interval
of length 1: $\CNF + \CNF\supseteq [\frac{1}{2},\frac{3}{2}]$ from
which the main result, Theorem \ref{maintheorem}, or Equation (\ref{eq:sum}), will follow.
Finally, Section \ref{sec:not4} is devoted to a proof of 
inequality (\ref{eq:not}).

The only previous result in this direction that we
are aware of was mentioned in the M.Sc.~thesis of
Aldenhoven (Nijmegen 2012, \cite{Noud}), namely
$$\RR=\NICF_6+\NICF_6,$$
a result directly obtained from that of Hall by using `singularization'
of regular continued fractions (see \cite{Noud} Section 3.4):
$\RCF_4\subset\NICF_6$.

\section{Cantor sets}\label{sec:cantor}
\noindent
For any closed, real interval $[a, b]$, with $a, b\in\R$ and $a<b$,
we denote its {\sl length} by $\lng{[a,b]}=b-a$; the same
notation is used for the length of open, or half-open real
intervals $(a, b), [a, b), (a, b]$.

The {\sl sum} $A+B$ of two closed real intervals $A=[a_l, a_r], B=[b_l, b_r]$
will be defined by pointwise addition
$$A + B = \{ x + y : x \in A, y \in B\}$$
and similarly for (half-)open intervals. In particular,
for closed intervals
$$A+B=[a_l, a_r] + [b_l, b_r] = [a_l+ b_l, a_r+ b_r].$$
This sum is clearly commutative,
associative, and the length is additive, on closed intervals $A, B, C$:
$$A + B = B + A,\quad (A+B)+C = A+(B+C), \quad\textrm{and}\quad\lng{A+B}= \lng{A} + \lng{B}.$$
The following useful result on the union of intervals is also straightforward.
\begin{lemma} \label{intervalSum}
Let $A$ and $B$ be closed intervals; if 
$$\lng{A} + \lng{B} \ge \lng{[\min(a_l, b_l), 
\max(a_r, b_r)]},$$
then $A \cup B$ is also a closed interval, namely 
$$A \cup B=[\min(a_l, b_l), \max(a_r, b_r)].$$
\end{lemma}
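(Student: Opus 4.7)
The plan is to reduce to a standard comparison of endpoints and then show that the length hypothesis forces the two intervals to overlap (or touch), so that their union is exactly the claimed interval.

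First I would observe that the statement is symmetric in $A$ and $B$, so without loss of generality I can assume $a_l \leq b_l$, giving $\min(a_l, b_l) = a_l$. The argument then splits according to which of $a_r, b_r$ is larger.

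If $a_r \geq b_r$, then $a_l \leq b_l \leq b_r \leq a_r$, so $B \subseteq A$ and hence $A \cup B = A = [a_l, a_r] = [\min(a_l, b_l), \max(a_r, b_r)]$, as required; here the length hypothesis is not even needed. If instead $a_r < b_r$, then $\max(a_r, b_r) = b_r$, so the target interval is $[a_l, b_r]$, with length $b_r - a_l$. Expanding the length hypothesis gives
$$(a_r - a_l) + (b_r - b_l) \;\geq\; b_r - a_l,$$
which rearranges to $a_r \geq b_l$. Thus $A$ and $B$ overlap (or at least share the endpoint $a_r = b_l$), and since both are closed intervals this immediately yields $A \cup B = [a_l, b_r]$.

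The only substantive step is the length inequality rearrangement in the second case, showing $b_l \leq a_r$; everything else is bookkeeping on endpoints. I do not foresee any real obstacle — the lemma is essentially a one-line verification once the case split on $a_r$ versus $b_r$ is made.
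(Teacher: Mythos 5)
Your proof is correct, and since the paper simply declares this lemma ``straightforward'' without supplying an argument, the natural endpoint/case-split verification you give is exactly the intended one: reduce by symmetry to $a_l\le b_l$, dispose of the trivial containment case $b_r\le a_r$, and in the remaining case rearrange the length hypothesis to $b_l\le a_r$, which forces the intervals to meet.
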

The Cantor sets $\cal C$  we will be constructing arise in the following
way from an initial, closed interval $I_1$ using a {\sl gap function}
$g$: for every interval $I_i$ the function $g$ selects an open
gap interval $g(I_i)\subset I_i$, determining two new closed intervals,
the left interval $I_{2i}$ and the right interval $I_{2i+1}$, where
$I_i=I_{2i}\cup g(I_i)\cup I_{2i+1}$, disjointly.
Pictorially:
\begin{center}
\begin{tikzpicture}[yscale = 0.75]
   \draw[|-|] (0,5) -- (8,5);
   \node[above] at (4,5) {$\itvs{1}$};
   \draw[|-|] (0,4) -- (3,4);
   \node[above] at (1.5,4) {$\itvs{2}$};
   \draw[|-|] (5,4) -- (8,4);
   \node[above] at (6.5,4) {$\itvs{3}$};
   \node at (4,4) {$\cuts{1}$};
   \draw[|-|] (0,3) -- (1,3);
   \node[above] at (0.5,3) {$\itvs{4}$};
   \draw[|-|] (2,3) -- (3,3);
   \node[above] at (2.5,3) {$\itvs{5}$};
   \node at (1.5,3) {$\cuts{2}$};
   \draw[|-|] (5,3) -- (6,3);
   \node[above] at (5.5,3) {$\itvs{6}$};
   \draw[|-|] (7,3) -- (8,3);
   \node[above] at (7.5,3) {$\itvs{7}$};
   \node at (6.5,3) {$\cuts{3}$};
   \draw[|-|] (0,2) -- (0.3333,2);
   \node[above] at (0.1667,2) {$\itvs{8}$};
   \draw[|-|] (0.6667,2) -- (1,2);
   \node[above] at (0.8333,2) {$\itvs{9}$};
   \draw[|-|] (2,2) -- (2.3333,2);
   \node[above] at (2.1667,2) {$\itvs{10}$};
   \draw[|-|] (2.6667,2) -- (3,2);
   \node[above] at (2.8333,2) {$\itvs{11}$};
   \draw[|-|] (5,2) -- (5.3333,2);
   \node[above] at (5.1667,2) {$\itvs{12}$};
   \draw[|-|] (5.6667,2) -- (6,2);
   \node[above] at (5.8333,2) {$\itvs{13}$};
   \draw[|-|] (7,2) -- (7.3333,2);
   \node[above] at (7.1667,2) {$\itvs{14}$};
   \draw[|-|] (7.6667,2) -- (8,2);
   \node[above] at (7.8333,2) {$\itvs{15}$};
   \node at (1.5,1.5) {$\vdots$};
   \node at (6.5,1.5) {$\vdots$};
   \end{tikzpicture}
\end{center}
The prototype is the original Cantor set 
where $g$ cuts out the middle third of any interval.
The {\sl Cantor set} determined by $I_1$ and $g$ will by definition be the set
of points of $I_1$ not contained in any gap:
$${\cal C}=\calC(I_1,\cutw) = \itvs{1} \setminus \bigcup_{i=1}^\infty \cuts{i}.$$
Note that all the $\cuts{i}$ are disjoint, so $\sum_{i=1}^\infty \lngcs{i} 
\le \lngs{1}$. Because $\lngs{1}$ is finite, we have 
$$\lim_{i \to \infty} \lngcs{i} = 0.$$
Also note that we identify $\calC$ with its subset of points on
the real line, and thus it may be that $\calC=(I_1, f)=(J_1, g)=\calD$ although
their gap functions $f$ and $g$ are different; the initial intervals
will satisfy $I_1=J_1$ in this case.

The relative size of gaps is given by the density ratio.
\begin{definition}
The {\sl density ratio} of Cantor set ${\cal C}=\calC(I_1,\cutw)$
is defined as:
$$\dratiow_{\cal C}=\dratio{I_1 ,\cutw} = \inf_{i=1}^\infty \frac{\min(\lng{I_{2i}},
	\lng{I_{2i+1}})}{\lng{\cut{I_i}}}.$$
\end{definition}

\smallskip\noindent
It follows that $\lngs{2i} \ge \dratiow_{\cal C}\cdot \lngcs{i}$ 
and $\lngs{2i+1} \ge \dratiow_{\cal C} \cdot \lngcs{i}$,  for all $i$.

\begin{definition}\rm
We call a Cantor set {\it hole-decreasing} if for all $i$ we have both
$\lngcs{2i} \le \lngcs{i}$ and $\lngcs{2i+1} \le \lngcs{i}$.
\end{definition}

\smallskip\noindent
Equivalently:  a Cantor set is hole-decreasing if and only if 
for every $i$, for every $k$ such that $I_k \subseteq I_i$ we have
$\lngcs{k} \le \lngcs{i}$.

If we call a Cantor set {\it $n$-hole-decreasing} if for every $i \le n$,
for every $k$ such that $I_k \subseteq I_i$ we have
$\lngcs{k} \le \lngcs{i}$, it will be clear that $\calC$ is 
hole-decreasing if and only if 
$\calC$ is $n$-hole-decreasing for every $n$.

By repeatedly interchanging an interval with its remainders, 
depicted below, it is possible to transform the construction
of any Cantor set into a hole-decreasing construction. Suppose,
for example, that $\lngcs{2i}>\lngcs{i}$:
\begin{center}
\begin{tikzpicture}[yscale = 0.75,xscale = 1.5]
\draw[|-|] (0,5) -- (8,5);
\node[above] at (4,5) {$\itvs{i}$};
\draw[|-|] (0,4) -- (6,4);
\node[above] at (3,4) {$\itvs{2i}$};
\draw[|-|] (7,4) -- (8,4);
\node[above] at (7.5,4) {$\itvs{2i+1}$};
\node at (6.5,4) {$\cuts{i}$};
\draw[|-|] (0,3) -- (2,3);
\node[above] at (1,3) {$\itvs{4i}$};
\draw[|-|] (4,3) -- (6,3);
\node[above] at (5,3) {$\itvs{4i+1}$};
\node at (3,3) {$\cuts{2i}$};
\node at (7.5,3.5) {$\vdots$};
\node at (1,2.5) {$\vdots$};
\node at (5,2.5) {$\vdots$};
\end{tikzpicture}
\end{center}
then combine the subintervals on the right to
\begin{center}
\begin{tikzpicture}[yscale = 0.75,xscale = 1.5]
\draw[|-|] (0,5) -- (8,5);
\node[above] at (4,5) {$\itvs{i}$};
\draw[|-|] (0,4) -- (2,4);
\node[above] at (1,4) {$\itvs{4i}$};
\draw[|-|] (4,4) -- (8,4);
\node[above] at (6,4) {$\itvs{4i+1}\cup\cuts{i}\cup\itvs{2i+1}$};
\node at (3,4) {$\cuts{2i}$};
\draw[|-|] (4,3) -- (6,3);
\node[above] at (5,3) {$\itvs{4i+1}$};
\draw[|-|] (7,3) -- (8,3);
\node[above] at (7.5,3) {$\itvs{2i+1}$};
\node at (6.5,3) {$\cuts{i}$};
\node at (1,3.5) {$\vdots$};
\node at (5,2.5) {$\vdots$};
\node at (7.5,2.5) {$\vdots$};
\end{tikzpicture}
\end{center}
and rename, to arrive at
\begin{center}
\begin{tikzpicture}[yscale = 0.75, xscale = 1.5]
\draw[|-|] (0,5) -- (8,5);
\node[above] at (4,5) {$J_i$};
\draw[|-|] (0,4) -- (2,4);
\node[above] at (1,4) {$J_{2i}$};
\draw[|-|] (4,4) -- (8,4);
\node[above] at (6,4) {$J_{2i+1}$};
\node at (3,4) {$\cud{J_i}$};
\draw[|-|] (4,3) -- (6,3);
\node[above] at (5,3) {$J_{4i+2}$};
\draw[|-|] (7,3) -- (8,3);
\node[above] at (7.5,3) {$J_{4i+3}$};
\node at (6.5,3) {$\cud{J_{2i+1}}$};
\node at (1,3.5) {$\vdots$};
\node at (5,2.5) {$\vdots$};
\node at (7.5,2.5) {$\vdots$};
\end{tikzpicture}
\end{center}
clearly satisfying
$$
\frac{\lng{J_{2i}}}{\lng{\cud{J_i}}} = \frac{\lngs{4i}}{\lngcs{2i}}\
\quad\textrm{and}\quad
\frac{\lng{J_{2i+1}}}{\lng{\cud{J_i}}} >
\frac{\lngs{4i+1}}{\lng{\cud{J_{i}}}},
$$
while
$$
\frac{\lng{J_{4i+2}}}{\lng{\cud{J_{2i+1}}}} =
\frac{\lngs{4i+1}}{\lngcs{i}} > \frac{\lngs{4i+1}}{\lngcs{2i}}
\quad\textrm{and}\quad
\frac{\lng{J_{4i+3}}}{\lng{\cud{J_{2i+1}}}} = \frac{\lngs{2i+1}}{\lngcs{i}},
$$
from which it follows that 
$\dratio{J_1, g'}\ge\dratio{I_1,g}$.
We obtain the following.

\begin{proposition} \label{holedecrease}
For every Cantor set $\cal C$ there exists a hole-decreasing Cantor set
$\cal D$ such that ${\cal D} = {\cal C}$ and $\dratiow_{\cal D}\ge\dratiow_{\cal C}$.
\end{proposition}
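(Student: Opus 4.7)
The plan is to iterate the local interchange operation illustrated in the excerpt and then pass to a limit. The excerpt already contains the crucial computation: if $\lngcs{2i}>\lngcs{i}$, then the pictured rearrangement produces a new Cantor set $\calC(I_1,g')$ equal to $\calC$ as a point set, and the four displayed ratio inequalities show that none of the ratios at the affected nodes falls below the previous minimum while ratios at all other nodes are untouched, so $\dratio{I_1,g'}\ge\dratiow_{\calC}$. The symmetric case $\lngcs{2i+1}>\lngcs{i}$ is handled by reflection. I would record this as a local swap lemma.

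Next, for each $n\ge 1$ I would produce an $n$-hole-decreasing Cantor set $\calC_n$ that equals $\calC$ as a point set and satisfies $\dratiow_{\calC_n}\ge\dratiow_{\calC}$, proceeding by induction on $n$. Given $\calC_{n-1}$, scan the indices $i\le n$ and repeatedly apply the local swap whenever a descendant of some such $i$ carries a hole strictly larger than $\cuts{i}$; such a violation can be bubbled up to index $i$ by a finite sequence of interchanges along a downward path. Because $\sum_i\lngcs{i}\le\lngs{1}$ is finite, only finitely many holes exceed any positive threshold, so this repair process terminates after finitely many swaps. The critical non-interference observation, which makes the induction on $n$ work, is that a swap performed inside an interval $I_k$ alters neither $\lngs{j}$ nor $\lngcs{j}$ for any index $j$ that is not a descendant of $k$; in particular, carrying out a swap with $k>m$ preserves the $m$-hole-decreasing property for every $m<n$ already achieved.

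Finally, for the limit, observe that the interchanges used to pass from $\calC_n$ to $\calC_{n+1}$ act only inside subtrees rooted at intervals of index exceeding $n$, so the gap function and labeling on indices $\le n$ stabilize. Define $g'$ at each index $k$ to be this stabilized value, which is well-defined as soon as $n\ge k$; the resulting $\calD=\calC(I_1,g')$ is hole-decreasing and equal to $\calC$ as a point set. Every ratio appearing in the definition of $\dratiow_{\calD}$ is, for $n$ large enough, one of the ratios in $\calC_n$, hence bounded below by $\dratiow_{\calC}$; taking the infimum yields $\dratiow_{\calD}\ge\dratiow_{\calC}$. The main obstacle I expect is precisely this non-interference/localization claim: verifying that every swap is strictly localized to the subtree beneath its node, so that repairs at shallower levels are never undone by later repairs at deeper ones, and that the index-by-index stabilization does define a genuine gap function on the whole tree.
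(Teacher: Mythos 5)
Your proposal follows the same local-interchange strategy as the paper, whose proof effectively stops after exhibiting the single swap and verifying that it preserves both the point set and the density-ratio lower bound, then asserts that ``by repeatedly interchanging'' one obtains a hole-decreasing construction. Your inductive passage through $n$-hole-decreasing approximations, the localization observation that a swap performed at a node of index $>n$ leaves the intervals, gaps and ratios at all indices $\le n$ untouched (so earlier repairs are never undone), and the index-by-index stabilization to a limiting gap function $g'$ are exactly the bookkeeping the paper leaves implicit, and they go through as you describe.
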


\section{Comparable and Dividable}\label{sec:compdiv}
In Section \ref{sec:nicf} we will see that bounded continued fractions
determine Cantor sets; for their sum we will want to add Cantor
sets, and this is defined in the obvious, pointwise, fashion:
${\cal C}_1+{\cal C}_2=\{ x+y: x\in {\cal C}_1, y\in{\cal C}_2\}$.
In this section, we give density requirements sufficient
to ensure for sums of Cantor
sets to `fill each other's gaps', that is, such that
${\cal C}_1+{\cal C}_2=I_1+I_2$, where ${\cal C}_1$ and ${\cal C}_2$
are constructed from the initial intervals $I_1$ and $I_2$.
This is done in a similar way as done by Hlavka
in \cite{Hlavka}, with {\it comparable} and {\it dividable} intervals.
At the end of this section we show that our requirements are slightly
weaker than Hlavka's.

First a lemma.
\begin{lemma}
For a subinterval $I_i$ of a Cantor set ${\cal C}=(I, g)$
and any interval~$J$:
\begin{equation} \label{sumofintervals}
\lng{J} \ge \lng{\cut{I_i}}\quad\Rightarrow\quad
 (I_{2i} + J) \cup (I_{2i+1} + J) = I_i + J.
\end{equation}
\end{lemma}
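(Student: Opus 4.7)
The plan is to reduce the statement to an elementary endpoint computation and then invoke Lemma \ref{intervalSum}. First I would introduce coordinates: write $I_{2i}=[a_l,a_r]$, $I_{2i+1}=[b_l,b_r]$ with $a_r<b_l$, so that $\cut{I_i}=(a_r,b_l)$ is the open gap of length $b_l-a_r$, and $I_i=[a_l,b_r]$. Write $J=[c,d]$. Then by the definition of interval addition,
\[
I_{2i}+J=[a_l+c,a_r+d],\qquad I_{2i+1}+J=[b_l+c,b_r+d],\qquad I_i+J=[a_l+c,b_r+d].
\]

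Next I would observe that the two translated intervals on the left lie in the correct order along the real line (the left endpoints satisfy $a_l+c\le b_l+c$ and the right endpoints satisfy $a_r+d\le b_r+d$), so the enclosing interval $[\min,\max]$ is exactly $[a_l+c,b_r+d]=I_i+J$. The key observation is that the hypothesis $\lng{J}\ge\lng{\cut{I_i}}$ translates precisely into $d-c\ge b_l-a_r$, i.e.\ $a_r+d\ge b_l+c$, which means the right endpoint of $I_{2i}+J$ is at least the left endpoint of $I_{2i+1}+J$. Equivalently,
\[
\lng{I_{2i}+J}+\lng{I_{2i+1}+J}=(a_r-a_l)+(b_r-b_l)+2\lng{J}\ge (b_r-a_l)+\lng{J}=\lng{I_i+J},
\]
which is exactly the hypothesis of Lemma \ref{intervalSum} applied to $A=I_{2i}+J$ and $B=I_{2i+1}+J$.

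Applying Lemma \ref{intervalSum} then yields $(I_{2i}+J)\cup(I_{2i+1}+J)=[a_l+c,b_r+d]=I_i+J$, completing the proof. I do not expect any serious obstacle here: the statement is essentially a bookkeeping identity, and the only substantive content is matching the gap-length hypothesis to the overlap condition needed to apply the previous lemma. The one thing to be careful about is the (trivial) verification that the two summands are correctly ordered on the real line, so that the enclosing interval really has endpoints $a_l+c$ and $b_r+d$; this follows from $a_l\le a_r<b_l\le b_r$ together with $c\le d$.
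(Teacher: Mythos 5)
Your proof is correct and follows essentially the same route as the paper: both apply Lemma \ref{intervalSum} to $A=I_{2i}+J$ and $B=I_{2i+1}+J$ and verify the length hypothesis by observing that $\lng{I_i+J}=\lng{I_{2i}}+\lng{\cut{I_i}}+\lng{I_{2i+1}}+\lng{J}\le\lng{I_{2i}+J}+\lng{I_{2i+1}+J}$ exactly when $\lng{J}\ge\lng{\cut{I_i}}$. The paper simply states this length inequality directly, whereas you unpack it in endpoint coordinates; the content is the same.
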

\begin{proof}
Apply Lemma \ref{intervalSum} with $A = I_{2i} + J$ and $B = I_{2i+1} + J$
together with:
$$\lng{I_i + J} = \lng{I_{2i}} + \lng{\cut{I_i}} + \lng{I_{2i+1}} + \lng{J} \le  \lng{I_{2i} + J} + \lng{I_{2i+1} + J}.$$
\end{proof}

\begin{theorem} \label{compareanddivideTheorem}
Suppose that ${\cal C}_1 = (I_1^1,\cutw^1), \ldots, {\cal C}_n =
(I_1^n,\cutw^n)$ are $n\geq 2$ 
Cantor sets for which the density ratios satisfy  
$\dratiow_{{\cal C}_i}\ge \ratio{i}$, for $1\leq i\leq n$, with
\begin{equation}\label{largerthan1} 
  \sum_{i=1}^n \frac{\ratio{i}}{\ratio{i} + 1} \ge 1
\end{equation}
and
\begin{equation}\label{initialCompare}
 \forall j,k:\ \lng{\itvb{j}{1}} \ge 
  \frac{\ratio{j}}{\ratio{j} + 1}(\ratio{k} + 1) \lng{\itvb{k}{1}}. 
\end{equation}
Then 
$${\cal C}_1+{\cal C}_2+\cdots +{\cal C}_n=I_1^1+I_1^2+\cdots+I_1^n.$$
\end{theorem}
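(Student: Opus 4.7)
The direction $\mathcal{C}_1+\cdots+\mathcal{C}_n \subseteq I_1^1+\cdots+I_1^n$ is immediate from $\mathcal{C}_i \subseteq I_1^i$, so my plan focuses on the reverse inclusion. Fix a target $t \in I_1^1+\cdots+I_1^n$. The aim is to produce $c_i \in \mathcal{C}_i$ with $c_1+\cdots+c_n = t$ by building, inductively in $m \geq 0$, nested tuples $(J_m^1,\ldots,J_m^n)$, where each $J_m^i$ is an interval at some level of the Cantor construction of $\mathcal{C}_i$, satisfying
\begin{itemize}
\item[(a)] $t \in J_m^1+\cdots+J_m^n$;
\item[(b)] a comparability-type invariant that keeps further descent feasible;
\item[(c)] $\max_i \lng{J_m^i} \to 0$ as $m \to \infty$.
\end{itemize}
From (a) and (c), the nested intervals contract to single points $c_i \in \bigcap_m J_m^i \subseteq \mathcal{C}_i$, and continuity of addition forces $c_1+\cdots+c_n = t$. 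By Proposition \ref{holedecrease} I may assume each $\mathcal{C}_i$ is hole-decreasing, so the gap sizes $\lng{g^i(J_m^i)}$ are monotone non-increasing along any descent of $J_m^i$.

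The inductive step selects an index $j$ and descends $J_m^j$ to one of its two Cantor children while leaving $J_m^k$ fixed for $k \neq j$. Feasibility of descent rests on identity (\ref{sumofintervals}) applied with the auxiliary interval $J = \sum_{k \neq j} J_m^k$, which is itself a closed interval. Its hypothesis $\lng{J} \geq \lng{g^j(J_m^j)}$ is ensured by combining comparability with dividability: applying the bound $\lng{J_m^k} \geq \frac{x_k}{x_k+1}(x_j+1)\lng{J_m^j}$ for each $k \neq j$ yields
\[
\sum_{k\neq j}\lng{J_m^k} \;\geq\; (x_j+1)\lng{J_m^j}\sum_{k\neq j}\frac{x_k}{x_k+1} \;\geq\; \lng{J_m^j} \;\geq\; \lng{g^j(J_m^j)},
\]
where the second step rewrites (\ref{largerthan1}) as $\sum_{k\neq j}\frac{x_k}{x_k+1} \geq \frac{1}{x_j+1}$. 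The identity then lets me pick $J_{m+1}^j$ as whichever child keeps $t$ inside the total sum, so (a) is preserved automatically.

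The main obstacle is invariant (b) and the choice of $j$ at each step. A strict reading of (\ref{initialCompare}) cannot survive a single descent: already in the extremal case $n=2$, $x_1=x_2=1$, comparability forces equal initial lengths and any descent strictly shrinks one of them, breaking equality. So the invariant I propose is a scale-tolerant relaxation, restoring (\ref{initialCompare}) only periodically. The natural plan is to work in cycles of $n$ descents, one per index in rotation, and to show that any full cycle, regardless of which children are forced by $t$, contracts every $J_m^i$ by a factor bounded away from $1$ and re-establishes (\ref{initialCompare}) at the finer scale. The per-cycle analysis should mirror Hlavka's argument from \cite{Hlavka} for $n=2$, with dividability (\ref{largerthan1}) replacing his two-term budget in distributing the gap-filling requirement across $n$ intervals; the density-ratio lower bound $\lng{J_{m+1}^j} \geq x_j\lng{g^j(J_m^j)}$ is what lets the new interval inherit enough slack from the current one. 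Invariant (c) is then a by-product of the uniform per-cycle contraction.
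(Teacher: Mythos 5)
Your high-level architecture matches the paper's: reduce to hole-decreasing Cantor sets via Proposition~\ref{holedecrease}, descend by replacing one coordinate interval with a Cantor child, and justify each descent by Lemma~\ref{intervalSum} via the observation encoded in \eqref{sumofintervals}. The gap is in your choice of invariant. You keep \eqref{initialCompare} literally --- comparing interval lengths to interval lengths --- and correctly observe that this cannot survive even one descent. But the paper's actual notion of \emph{comparable} tuples replaces the right-hand side $\lng{I^k_{1}}$ by the \emph{gap} length $\lng{g^k(I^k_{i_k})}$: the working invariant is
\[
\forall j,k:\quad \lng{I^j_{i_j}} \;\ge\; \frac{x_j}{x_j+1}(x_k+1)\,\lng{g^k(I^k_{i_k})},
\]
which the initial tuple satisfies automatically since $\lng{I^k_1}\ge\lng{g^k(I^k_1)}$. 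This gap-based invariant \emph{is} preserved under descent, provided you always split the coordinate $j$ whose weighted gap $(x_j+1)\lng{g^j(I^j_{i_j})}$ is maximal: hole-decreasing gives $\lng{g^j(I^j_{2i_j})}\le\lng{g^j(I^j_{i_j})}$ for the refined slot, and the density ratio gives $\lng{I^j_{2i_j}}\ge x_j\lng{g^j(I^j_{i_j})}\ge\frac{x_j}{x_j+1}(x_k+1)\lng{g^k(I^k_{i_k})}$ for the shrunken interval --- this is exactly Lemma~\ref{comparableimplies}.

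Because you are carrying the wrong invariant, you are forced to invent a repair --- the ``cyclic descent with periodic restoration'' --- and this is where the argument breaks. Rotating through the indices in a fixed order is not justified: mid-cycle, once $J^1_m$ has shrunk and $J^2_m$ has not, the very inequality $\lng{\sum_{k\ne j}J^k_m}\ge\lng{g^j(J^j_m)}$ you need for the next split is no longer available from your hypotheses, so the descent can stall. Hlavka's two-set argument does not transparently tell you how to budget across $n$ coordinates in a round-robin; the whole point of the maximal-weighted-gap selection rule is that it is the schedule under which the invariant is self-sustaining. Your claim (c), that every coordinate contracts geometrically per cycle, is also not established and in fact need not hold: in the paper, one does not prove the intervals shrink to points, only that eventually no tuple in the construction contains any fixed gap of any $\mathcal{C}^j$. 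Adopting the gap-based comparability and the maximal-gap selection rule would close the argument; as written, the proposal has an unfilled hole precisely at the step you flag as ``the main obstacle.''
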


\smallskip\noindent
The remainder of this section is geared towards a proof of this
theorem. 

First we note that we may as well assume that the Cantor sets
in the theorem are hole-decreasing, since by Proposition \ref{holedecrease}
there exist Cantor sets ${\cal D}_i$ with $\calD_i=\calC_i$
that are hole-decreasing and for which
$\dratiow_{{\cal D}_i}\geq\dratiow_{{\cal C}_i}$.

\begin{definition}\rm
Let $\itvi{j}$ for $1\leq j\leq n$, be subintervals
in the construction of hole-decreasing Cantor sets
${\cal C}_j=(I_1^j, g^j)$ for which
$\dratio{{\cal C}_j} \ge \ratio{j}$
such that (\ref{largerthan1}) and (\ref{initialCompare}) hold.
Then $\itvi{1}, \itvi{2}, \ldots, \itvi{n}$ are called {\it comparable} if 
$$\forall j,k:\ \lngi{j} \ge \frac{\ratio{j}}{\ratio{j} + 1} (\ratio{k} + 1) \lngc{k}.$$
\end{definition}

\smallskip\noindent
Note that the initial intervals 
$\itvb{1}{1},\itvb{2}{1}, \ldots, \itvb{n}{1}$ for such Cantor sets
are always comparable since 
$\lng{\itvb{k}{1}} \ge \lng{g^k(\itvb{k}{1})}$.
 \begin{lemma}
If $\itvi{1}, \itvi{2}, \ldots, \itvi{n}$ are comparable intervals, 
then for all $k$:
$$\sum_{j=1}^n \itvi{j} = 
 \left( \sum_{j=1;j \ne k}^n \itvi{j} + \itvt{k}{2i} \right) \cup 
 \left(\sum_{j=1;j \ne k}^n \itvi{j} + \itvb{k}{2i_k+1}\right).$$
\end{lemma}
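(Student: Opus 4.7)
The plan is to reduce this statement to the previous lemma (equation~\eqref{sumofintervals}) by grouping all the intervals other than $\itvi{k}$ into one big summed interval. Concretely, I set
$$J = \sum_{j=1;\,j\ne k}^n \itvi{j},$$
and note that $J$ is a closed interval (since sums of closed intervals are closed intervals). The desired identity then reads exactly
$$\itvi{k} + J = (\itvt{k}{2i_k} + J)\cup(\itvt{k}{2i_k+1}+J),$$
so an application of the lemma preceding Theorem~\ref{compareanddivideTheorem} with this $J$ and $I_i = \itvi{k}$ will finish the proof, provided its hypothesis $\lng{J}\ge \lng{\cut{\itvi{k}}}=\lngc{k}$ is verified.

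The bulk of the work, then, is to check this single length inequality. By comparability, for every $j\ne k$,
$$\lngi{j} \;\ge\; \frac{\ratio{j}}{\ratio{j}+1}\,(\ratio{k}+1)\,\lngc{k}.$$
Summing over $j\ne k$ and using length-additivity on closed intervals,
$$\lng{J} \;=\; \sum_{j\ne k}\lngi{j} \;\ge\; (\ratio{k}+1)\,\lngc{k}\sum_{j\ne k}\frac{\ratio{j}}{\ratio{j}+1}.$$
The remaining step is to show $\sum_{j\ne k}\frac{\ratio{j}}{\ratio{j}+1}\ge \frac{1}{\ratio{k}+1}$, which is where assumption~\eqref{largerthan1} enters: subtracting the $j=k$ term from both sides of~\eqref{largerthan1} gives
$$\sum_{j\ne k}\frac{\ratio{j}}{\ratio{j}+1} \;\ge\; 1-\frac{\ratio{k}}{\ratio{k}+1} \;=\;\frac{1}{\ratio{k}+1}.$$
Combining, $\lng{J}\ge \lngc{k}$, as needed.

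There is no real obstacle here; the content of the lemma is precisely that the comparability bound plus the summation condition~\eqref{largerthan1} combine to exactly the threshold required by the cover lemma. The only mild bookkeeping point is to note that the ``other'' summand $J$ is itself a single closed interval (so the earlier lemma, stated for one interval $J$, applies without modification), and that the comparability hypothesis is symmetric in the indices, so the argument works for any choice of the distinguished index $k$.
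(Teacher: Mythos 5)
Your proof is correct and follows essentially the same route as the paper: set $J=\sum_{j\ne k}\itvi{j}$, invoke the preceding lemma with $I_i=\itvi{k}$, and verify $\lng{J}\ge\lngc{k}$ by summing the comparability bounds and applying condition~\eqref{largerthan1}. The algebra and the key reduction are identical to the paper's argument.
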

\begin{proof}
We would like to apply (\ref{sumofintervals}) with $\sum_{j=1;j \ne k}^n \itvi{j}$ 
and $\itvi{k}$ for $J$ and $I_i$, respectively.
What remains to be shown is that $\lng{\sum_{j=1;j \ne k}^n \itvi{j}} \ge 
\lngc{k}$. Indeed, comparability implies
\begin{align*}
\sum_{j=1;j \ne k}^n \lngi{j} &\ge \left(\sum_{j=1;j \ne k}^n 
	\frac{\ratio{j}}{\ratio{j} + 1}\right) (\ratio{k} + 1) \lngc{k}\\
&=  \left(\sum_{j=1}^n \frac{\ratio{j}}{\ratio{j} + 1} - 
	\frac{\ratio{k}}{\ratio{k} + 1}\right) (\ratio{k} + 1) \lngc{k}\\
&\overset{\eqref{largerthan1}}{\ge} 
    \left(1 - \frac{\ratio{k}}{\ratio{k} + 1}\right) (\ratio{k} + 1) \lngc{k}\\
&= \lngc{k}.
\end{align*}
\end{proof}

\begin{definition}\rm
Comparable intervals $\itvi{1}, \itvi{2}, \ldots, \itvi{n}$ are
$j$-{\it dividable} if both 
$\itvi{1}, \itvi{2}, \ldots, \itvb{j}{2i_j},\ldots,\itvi{n}$ and 
$\itvi{1}, \itvi{2}, \ldots, \itvb{j}{2i_j+1},\ldots,\itvi{n}$ 
are comparable intervals.
\end{definition}

\begin{lemma}\label{comparableimplies}
Let comparable intervals $\itvi{1}, \itvi{2}, \ldots, \itvi{n}$ be given. 
If $j$ is such that $(\ratio{j} + 1) \lngc{j}$ is maximal, i.e.,
\[\forall k : (\ratio{j} + 1) \lngc{j} \ge (\ratio{k} + 1) \lngc{k},\]
then $\itvi{1}, \itvi{2}, \ldots, \itvi{n}$ are $j$-dividable.
\end{lemma}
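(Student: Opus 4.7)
The plan is to unpack the definition of $j$-dividability and verify directly that both replacements $\itvi{j}\leadsto\itvb{j}{2i_j}$ and $\itvi{j}\leadsto\itvb{j}{2i_j+1}$ preserve comparability. By symmetry I would handle only the case $\itvi{j}\leadsto\itvb{j}{2i_j}$. I need to check, for every pair $(p,q)\in\{1,\ldots,n\}^2$ in the modified tuple, the inequality
$$\lngi{p}\ \ge\ \frac{\ratio{p}}{\ratio{p}+1}(\ratio{q}+1)\,\lngc{q}$$
(with $\itvi{j}$ replaced by $\itvb{j}{2i_j}$ where appropriate).

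Two preliminary observations drive everything. First, hole-decreasingness gives $\lng{g^j(\itvb{j}{2i_j})}\le\lngc{j}$. Second, the density ratio bound $\dratio{\cal C_j}\ge\ratio{j}$ yields $\lng{\itvb{j}{2i_j}}\ge\ratio{j}\cdot\lngc{j}$. With these in hand I would split into four cases. If $p\ne j$ and $q\ne j$, nothing changes and the inequality is just the original comparability. If $p\ne j$ and $q=j$, the right-hand side can only decrease (since $\lng{g^j(\itvb{j}{2i_j})}\le\lngc{j}$), so the inequality is inherited from the original comparability. If $p=j=q$, the required bound reads $\lng{\itvb{j}{2i_j}}\ge\ratio{j}\cdot\lng{g^j(\itvb{j}{2i_j})}$, which follows by combining the two preliminary observations.

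The one case where the hypothesis of the lemma has to be invoked is $p=j$, $q\ne j$: here the left-hand side has shrunk while the right-hand side is unchanged. Using the density ratio bound I would rewrite
$$\lng{\itvb{j}{2i_j}}\ \ge\ \ratio{j}\cdot\lngc{j}\ =\ \frac{\ratio{j}}{\ratio{j}+1}(\ratio{j}+1)\,\lngc{j},$$
so it suffices to show $(\ratio{j}+1)\lngc{j}\ge(\ratio{q}+1)\lngc{q}$ — which is exactly the maximality hypothesis on $j$. Chaining the two inequalities gives the desired bound and completes this case.

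This last case is the only genuine obstacle, and it is the whole point of requiring $j$ to maximize $(\ratio{j}+1)\lngc{j}$: when we divide along index $j$, the left-hand side $\lngi{p}$ in the comparability inequality for $p=j$ drops by a factor related to $\ratio{j}/(\ratio{j}+1)$, and maximality is precisely what guarantees that this shrinkage remains compatible with every other index $q$. Assembling the four cases for both $\itvb{j}{2i_j}$ and $\itvb{j}{2i_j+1}$ then establishes $j$-dividability.
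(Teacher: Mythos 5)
Your proof is correct and takes essentially the same approach as the paper's: hole-decreasingness bounds the new gap, the density ratio bound gives $\lng{\itvb{j}{2i_j}}\ge\ratio{j}\lngc{j}$, and maximality of $(\ratio{j}+1)\lngc{j}$ closes the argument. The only difference is presentational — you spell out all four $(p,q)$ cases explicitly, whereas the paper treats the $p\neq j$ cases as implicit and handles $q=j$ and $q\neq j$ in one chain of inequalities.
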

\begin{proof}
Since the Cantor sets are hole-decreasing, we have that 
$\lng{\cutb{j}{2i_j}} \le \lngc{j}$ and $\lng{\cutb{j}{2i_j+1}} \le \lngc{j}$.
We still have to prove
$$\forall k: \lng{\itvb{j}{2j}} \ge 
	\frac{\ratio{j}}{\ratio{j} + 1} (\ratio{k} + 1) \lngc{k}$$
and
$$\forall k: \lng{\itvb{j}{2j+1}} \ge 
	\frac{\ratio{j}}{\ratio{j} + 1} (\ratio{k} + 1) \lngc{k}.$$ 
As both 
$\lng{\itvb{j}{2i_j}} \ge \ratio{j}\lngc{j}$ and 
$\lng{\itvb{j}{2i_j+1}} \ge \ratio{j}\lngc{j}$, 
it is sufficient to note that: 
$$\ratio{j}\lngc{j} \ge \ratio{j}\lngc{j} 
\frac{(\ratio{k}+1)\lngc{k}}{(\ratio{j}+1)\lngc{j}} = 
\frac{\ratio{j}}{\ratio{j} + 1} (\ratio{k} + 1) \lngc{k}.$$
\end{proof}

\begin{proof}[Proof of Theorem \ref{compareanddivideTheorem}]
We will recursively create sets $G_m$ on $n$-tuples of intervals.
Start with
$$G_0 = \{\langle \itvb{1}{1},\itvb{2}{1}, \ldots, \itvb{n}{1} \rangle\};$$
and then, for $m\geq 0$:
\begin{eqnarray*}
	G_{m+1}&=&\left\{\langle \itvi{1},\itvi{2}, \ldots,
	\itvb{k}{2i_{k}}, \ldots, \itvi{n} \rangle,\langle
	\itvi{1}, \itvi{2}, \ldots, \itvb{k}{2i_{k}+1}, \ldots,
	\itvi{n} \rangle\right.\ \ \vert\\
&&\quad \langle \itvi{1}, \itvi{2}, \ldots, \itvi{k}, \ldots, \itvi{n}\rangle \in G_m\quad\land\\
&&\quad\quad\left. \forall l : x_k \cdot \lng{\cutb{k}{i_k}}\ge x_l\cdot \lng{\cutb{l}{i_l}} \right\}.
\end{eqnarray*}
Trivially, every $\langle \itvi{1}, \itvi{2}, \ldots, 
\itvi{n} \rangle \in G_0$ is comparable. With induction
and Lemma \ref{comparableimplies}, we have that
$\langle \itvi{1}, \itvi{2}, \ldots, \itvi{n} \rangle \in G_m$
is comparable for every $m$, and therefore

$$\bigcup \left\{ \sum \itvi{j} \middle| \langle \itvi{1}, \itvi{2}, 
\ldots, \itvi{n} \rangle \in G_m \right\} = \sum \itvb{j}{1}.$$

Let $\cuti{k}$ be a gap of $C^k$, then for every $j$ there 
are only a finite number $a_j$ of gaps $\cutb{j}{l}$ such that 
$\lngct{j}{l} \ge \frac{\ratio{k}}{\ratio{j}}\lngct{k}{i}$. 
For every $a \ge \sum a_j$, there will be no  $\langle \itvi{1},
\itvi{2}, \ldots, \itvi{n} \rangle \in G_a$ such that 
$\cuti{k} \subseteq\itvi{k}$.

Let us define $G$ as:
$$G = \lim_{i \to \infty} G_i.$$
Then for each $\langle \itvi{1}, \itvi{2}, \ldots, \itvi{n} \rangle \in G$ 
we have $\itvi{j} \subset C^j$. 
Therefore $$\left\{\sum_{i=1}^n x_i \middle| x_i \in C^i\right\}
\subseteq \bigcup \left\{ \sum c_j \middle| \langle \itvi{1},
\itvi{2}, \ldots, \itvi{n} \rangle \in G \land c_j \in \itvi{j}
 \right\} = \sum \itvb{j}{1}.$$
\end{proof}
\noindent
We give one more theorem, which is less general but will be easier to use.
\begin{theorem} \label{usefulCantor}
Given Cantor sets ${\cal C}^1, {\cal C}^2, \ldots {\cal C}^n$ with 
density ratios equal to $\ratio{1}, \ratio{2}, \ldots, \ratio{n}$ such that
\begin{equation}
    \sum_{i=1}^n \frac{\ratio{i}}{\ratio{i} + 1} \ge 1;
\end{equation}
and
\begin{equation}
    \forall j,k: \lng{\itvb{j}{1}} \ge \frac{\ratio{j}}{\ratio{j} + 1} 
	    \frac{\ratio{k} + 1}{2\ratio{k} + 1} \lng{\itvb{k}{1}}.
\end{equation}
Then $$\sum_{i=1}^n I_1^i = \{\sum_{i=1}^n x_i : x_i \in {\cal C}^i\}.$$
\end{theorem}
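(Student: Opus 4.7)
The plan is to deduce Theorem \ref{usefulCantor} from Theorem \ref{compareanddivideTheorem} by showing that the slightly weaker interval-length hypothesis of the former, combined with the density ratio lower bound, already implies the comparability inequality at the initial level that the previous proof actually uses. First, by Proposition \ref{holedecrease}, I may replace each $\mathcal{C}^i$ by a hole-decreasing Cantor set with the same underlying point set and density ratio no smaller than $\ratio{i}$, so I assume every $\mathcal{C}^i$ is hole-decreasing.

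The key observation is that hole-decreasingness together with $\dratiow_{\mathcal{C}^k} \ge \ratio{k}$ gives a uniform upper bound on the size of the initial gap: since $\lng{\itvb{k}{2}}$ and $\lng{\itvb{k}{3}}$ are each at least $\ratio{k}\,\lng{\cutb{k}{1}}$, one has
$$\lng{\itvb{k}{1}} \;=\; \lng{\itvb{k}{2}} + \lng{\cutb{k}{1}} + \lng{\itvb{k}{3}} \;\ge\; (2\ratio{k}+1)\,\lng{\cutb{k}{1}},$$
so $\lng{\cutb{k}{1}} \le \lng{\itvb{k}{1}}/(2\ratio{k}+1)$. Feeding this into the second hypothesis of Theorem \ref{usefulCantor} gives, for all $j,k$,
$$\lng{\itvb{j}{1}} \;\ge\; \frac{\ratio{j}}{\ratio{j}+1}\cdot\frac{\ratio{k}+1}{2\ratio{k}+1}\,\lng{\itvb{k}{1}} \;\ge\; \frac{\ratio{j}}{\ratio{j}+1}(\ratio{k}+1)\,\lng{\cutb{k}{1}},$$
which is exactly the comparability inequality from the definition of \emph{comparable}, applied to the initial tuple $\langle \itvb{1}{1},\ldots,\itvb{n}{1}\rangle$.

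With the initial comparability inequality in hand, everything in the proof of Theorem \ref{compareanddivideTheorem} downstream of the verification that $G_0$ is comparable applies verbatim: Lemma \ref{comparableimplies} uses only the comparability inequality and hole-decreasingness to produce $j$-dividability (hypothesis (\ref{initialCompare}) is never re-invoked), the recursive construction of the $G_m$ preserves the inequality, and the limit $G$ consists of tuples of intervals shrinking to single points, one in each $\mathcal{C}^i$. Together with the first hypothesis of Theorem \ref{usefulCantor} — which coincides with (\ref{largerthan1}) — this yields $\sum_{i=1}^n \itvb{i}{1} = \{\sum_i c_i : c_i \in \mathcal{C}^i\}$, as required.

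The main obstacle is recognizing that hypothesis (\ref{initialCompare}) in Theorem \ref{compareanddivideTheorem} is used only to establish the initial comparability inequality, so any other sufficient condition for that inequality preserves the conclusion. The factor $1/(2\ratio{k}+1)$ appearing in the hypothesis of Theorem \ref{usefulCantor} is precisely the largest fraction of an initial interval that a single gap can occupy under the density ratio constraint, so the two hypotheses should be seen as equivalent up to this optimal gap estimate; Theorem \ref{usefulCantor} simply packages this estimate into the statement, trading the direct gap bound for a cleaner condition involving only initial interval lengths.
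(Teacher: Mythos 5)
Your proof is correct and follows essentially the same route as the paper: both bound the gap by $\lng{\itvb{k}{1}}/(2\ratio{k}+1)$, combine this with the stated hypothesis to obtain the initial comparability inequality $\lng{\itvb{j}{1}} \ge \frac{\ratio{j}}{\ratio{j}+1}(\ratio{k}+1)\lng{\cut{\itvb{k}{1}}}$, and then invoke Theorem \ref{compareanddivideTheorem}, whose proof indeed only uses hypothesis (\ref{initialCompare}) to establish initial comparability. One small inaccuracy worth noting: the estimate $\lng{\itvb{k}{1}} \ge (2\ratio{k}+1)\lng{\cutb{k}{1}}$ follows from the density ratio lower bound alone and does not require hole-decreasingness; the reduction to hole-decreasing Cantor sets is still needed, but only because the proof of Theorem \ref{compareanddivideTheorem} (via Lemma \ref{comparableimplies}) uses it, and that reduction is already performed inside that proof.
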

\begin{proof}
  First, notice that:
  \begin{align*}
      \lng{\itvb{k}{i_k}} &= \lng{\itvb{k}{2i_k}} + \lng{\cut{\itvb{k}{i_k}}} + 
\lng{\itvb{k}{2i_k+1}}\\[5pt]
      &= \frac{\lng{\itvb{k}{2i_k}} + \lng{\cut{\itvb{k}{i_k}}} + \lng{\itvb{k}{2i_k+1}}}{\lng{\cut{\itvb{k}{i_k}}}} \cdot \lng{\cut{\itvb{k}{i_k}}}\\[5pt]
      &\ge (2\ratio{k} + 1)\lng{\cut{\itvb{k}{i}}}.
  \end{align*}

  Since $\lng{\itvb{k}{1}} \ge \lng{\itvb{k}{j}}$ for all $j$, $\max_{i} \lng{\cut{\itvb{k}{i}}} \le \frac{1}{2\ratio{k} + 1}\lng{\itvb{k}{1}}$.

  So, for all $j,k$:

  $$\lng{\itvb{j}{1}} \ge \frac{\ratio{j}}{\ratio{j} + 1} \frac{\ratio{k} + 1}{2
\ratio{k} + 1} \lng{\itvb{k}{1}} \ge \frac{\ratio{j}}{\ratio{j} + 1} (\ratio{k} 
+ 1)\max_{i} \lng{\cut{\itvb{k}{i}}}.$$

  We finish by applying Theorem \ref{compareanddivideTheorem}.
\end{proof}

\begin{remark}\rm
In \cite{Hlavka}, Hlavka has a similar result on sums of Cantor
sets; he defines, for
a Cantor set ${\cal C}=(I, g)$ the {\it relative smallest remainder}
	$H_{\cal C}$ and {\it the relative biggest gap} $G_{\cal C}$ by
$$H_{\cal C}=\min\left(\min_i\frac{\lngs{2i}}{\lngs{i}},\,
	\min_i\frac{\lngs{2i+1}}{\lngs{i}}\right)\quad\textrm{and}\quad
G_{\cal C}=\max_i \frac{\lngcs{i}}{\lngs{i}}.$$
Clearly, $\dratiow_{\cal C}\geq \frac{H_{\cal C}}{G_{\cal C}}$.
He then proves, see \cite[Theorem 3]{Hlavka}, that ${\cal C}_1+{\cal C}_2=I^1+I^2$
provided that the following are satisfied:
\begin{equation} \label{hlavka1.1}
 G_{{\cal C}_1} \cdot G_{{\cal C}_2} \le H_{{\cal C}_1} \cdot H_{{\cal C}_2},
\end{equation}
\begin{equation} \label{hlavka1.2}
 G_{{\cal C}_1} \cdot \lng{I^1} \le \lng{I^2},
\end{equation}
and
\begin{equation} \label{hlavka1.3}
 G_{{\cal C}_2} \cdot \lng{I^2} \le \lng{I^1}.
\end{equation}
Now it is not difficult to show (cf.\ \cite{thesis} for details)
that (\ref{hlavka1.1}), (\ref{hlavka1.2}),
and (\ref{hlavka1.3}) imply the existence of $x_1\leq\dratiow_{{\cal C}_1}$
and $x_2\leq\dratiow_{{\cal C}_2}$ such that (\ref{largerthan1}) and
(\ref{initialCompare}) are satisfied. 
Thus \cite[Theorem 3]{Hlavka} follows from Theorem 
\ref{compareanddivideTheorem}.
\end{remark}
\section{Construction of the Cantor set}\label{sec:nicf}
We now turn to the nearest integer continued fraction expansions, and first
introduce the subsets of reals with bounded expansion.
\begin{definition}\label{def:nicfr}
$\NCF_r$ is a subset of $\RR$ given by
\[\NCF_r = \left\{x : x \in \RR \mid \NCF(x) = [a_0;a_1, a_2,\ldots] \text{ and }\ |a_i| \le r \text{ for } {i \ge 1}\right\}. \]
\end{definition}
\noindent
From the rules for $\NCF$ given in the Introduction, it follows that
$x \in \NF$, with $\NICF(x)= [a_0;a_1, a_2,\ldots]$,
if and only if for all $i \ge 1$:
\begin{itemize}
\item $a_{i} \in \{-5, -4, -3, -2, 2, 3, 4, 5\}$; 
\item if $a_{i} = 2$, then $a_{i+1} \in \{2, 3, 4, 5\}$; 
\item if $a_{i} = -2$, then $a_{i+1} \in \{-2,-3,-4,-5\}$.
\end{itemize}

\begin{definition}\label{def:nicfstar}\rm
$\NFS$ is the subset of $\NF \cap [0,1]$ containing only the numbers 
represented by a nearest integer continued fraction satisfying:
$$\text{if }\vert a_i\vert = 5\text{ then }\sign{a_i}\neq\sign{a_{i+1}}.$$
\end{definition}
\noindent
As a consequence, $\NFS$ satisfies additional rules
to the ones for $\NF$:
if $\NICF(x)=[a_0; a_1, a_2, \ldots]$ then $x\in\NFS$ if and only if
\begin{itemize}
\item $a_0 \in \{0,1\}$;
\item if $a_0 = 0$, then  $a_{1} \in \{ 2, 3, 4, 5\}$; 
\item if $a_0 = 1$, then  $a_{1} \in \{ -2, -3, -4, -5\}$;
\item if $a_{i} = 5$, then $a_{i+1} \in \{-2, -3, -4, -5\}$; 
\item if $a_{i} = -5$, then $a_{i+1} \in \{ 2, 3, 4, 5\}$.
\end{itemize}
It easily follows that for all $z \in \ZZ$ and $x \in \NFS$ 
we have $z + x \in  \NF$.

We will show that $\NFS \setminus \QQ$ can be described as a Cantor set,
with density ratio greater than 1, and an initial interval with size
greater than $\frac{1}{2}$.

Let $\mu\in\RR$ be defined by $\mu=\frac{5 + \sqrt{21}}{2}=4.791287\cdots$.
\begin{lemma}
Both $\frac{1}{\mu}$ and $1-\frac{1}{\mu}$ are in 
$\NFS \setminus \QQ$, and for all $x$ in $\NFS$ holds
$$\frac{5-\sqrt{21}}{2}=\frac{1}{\mu}
\leq x\leq 1-\frac{1}{\mu}=\frac{\sqrt{21}-3}{2};$$
also
$$\left[\frac{1}{\mu}, 1-\frac{1}{\mu}\right]\supset [0.20872, 0.79128].$$
\end{lemma}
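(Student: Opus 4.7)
I would first verify the memberships. The defining identity $\mu^2 = 5\mu - 1$, equivalently $\mu = 5 - 1/\mu$ (and $1/\mu = 5 - \mu$), yields directly that
\[
\NICF\!\left(\tfrac{1}{\mu}\right) = [0;\, 5, -5, 5, -5, \ldots],\qquad
\NICF\!\left(1 - \tfrac{1}{\mu}\right) = [1;\, -5, 5, -5, 5, \ldots],
\]
as one checks by iterating the nearest-integer algorithm (the tails alternate between $\mu$ and $-\mu$, whose nearest integers are $5$ and $-5$). Inspection of the rules listed after Definition~\ref{def:nicfstar} shows both expansions are admissible for $\NFS$, and both numbers are irrational since $\mu$ is a root of the irreducible quadratic $t^2 - 5t + 1$.

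For the bounds, I would exploit the involution $x \mapsto 1-x$ of $\NFS$: negating every $a_i$ with $i \ge 1$ and toggling $a_0$ between $0$ and $1$ preserves all the rules of Definition~\ref{def:nicfstar} (and exchanges the two extremal expansions above). It therefore suffices to prove $x \ge 1/\mu$ for $x \in \NFS$ with $a_0 = 0$; the bound $x \le 1 - 1/\mu$ for $a_0 = 1$ then follows by applying this to $1-x$.

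The core of the argument is a self-referential upper bound on the tails. Define
\[
\alpha \;=\; \sup\{\, x_i : i \ge 1,\ a_i = 5,\ x \in \NFS \,\},
\]
so $\alpha \in [9/2, 11/2]$ by the NICF rule $|x_i - 5| \le 1/2$, and the $\NFS$-symmetry forces $\inf\{x_i : a_i = -5,\ x \in \NFS\} = -\alpha$. Whenever $a_i = 5$, write $x_i = 5 + 1/x_{i+1}$ with $a_{i+1} \in \{-5, -4, -3, -2\}$. In the non-recursive cases $a_{i+1} \in \{-4, -3, -2\}$, the NICF rule gives $x_{i+1} \ge -9/2$, hence $x_i \le 5 - 2/9 = 43/9 < \mu$. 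In the recursive case $a_{i+1} = -5$, the bound $x_{i+1} \ge -\alpha$ yields $x_i \le 5 - 1/\alpha$. Taking the supremum,
\[
\alpha \;\le\; \max\!\bigl(43/9,\ 5 - 1/\alpha\bigr),
\]
so either $\alpha \le 43/9 < \mu$ directly, or $\alpha \le 5 - 1/\alpha$, i.e.\ $\alpha^2 - 5\alpha + 1 \le 0$; since the positive roots of this quadratic are $1/\mu$ and $\mu$ and $\alpha \ge 9/2 > 1/\mu$, this gives $\alpha \le \mu$.

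With this claim in hand, the bounds follow immediately: for $x \in \NFS$ with $a_0 = 0$ and $a_1 \in \{2, 3, 4, 5\}$, if $a_1 \le 4$ then $x_1 \le 9/2$ gives $x = 1/x_1 \ge 2/9 > 1/\mu$, while if $a_1 = 5$ the claim gives $x_1 \le \mu$, hence $x \ge 1/\mu$. The inclusion $[1/\mu, 1 - 1/\mu] \supset [0.20872, 0.79128]$ is a one-line numerical check: $4.58256^2 < 21$ yields $\sqrt{21} > 4.58256$, and hence $1/\mu = (5 - \sqrt{21})/2 < 0.20872$. The main obstacle is the self-referential step $\alpha \le \mu$, which requires both the $\NFS$-symmetry (to close the recursion between the $a_i = 5$ and $a_i = -5$ cases) and solving the quadratic inequality; the rest is routine case analysis on the digit rules.
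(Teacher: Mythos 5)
Your proof is correct, but it takes a genuinely different route from the paper's. The paper establishes the bounds by first computing $\NICF(\tfrac{1}{\mu})=[0;\overline{5,-5}]$ and $\NICF(1-\tfrac{1}{\mu})=[1;\overline{-5,5}]$ (just as you do), and then simply invokes the standard lexicographic comparison rule for continued fractions --- $[a_0;a_1,\ldots,a_n,\ldots]<[a_0;a_1,\ldots,b_n,\ldots]$ iff $n$ even and $a_n<b_n$, or $n$ odd and $a_n>b_n$ --- which, applied digit by digit to the $\NFS$ constraints $|a_i|\le 5$, $a_0\in\{0,1\}$, etc., immediately forces $[0;\overline{5,-5}]$ and $[1;\overline{-5,5}]$ to be the extremal values. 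Your argument instead avoids the comparison lemma entirely: you exploit the involution $x\mapsto 1-x$ of $\NFS$ (which you correctly observe negates every $a_i$ for $i\ge 1$ and toggles $a_0$, hence negates each tail $x_i$) to reduce to one side, and then close a self-referential bound $\alpha\le\max(43/9,\ 5-1/\alpha)$ on the supremum $\alpha$ of tails with $a_i=5$, solving the resulting quadratic inequality to get $\alpha\le\mu$. Both approaches are sound. The paper's is shorter and more in the spirit of how one usually reads off extremal CF values; yours is more self-contained (it needs no appeal to the lexicographic comparison result, only the basic NICF inequalities $|x_i-a_i|\le\tfrac12$ and $|x_i|\ge 2$) and makes the fixed-point nature of the extremal expansions explicit. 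One small clean-up worth making if you write this up: in the digit-rule case $a_1\le 4$ you write $x_1\le 9/2$, which gives $x\ge 2/9$; note that $2/9\approx 0.2222>1/\mu$, so the bound is not tight for these digits --- that is fine, but it is worth flagging that only the $a_1=5$ branch actually attains $1/\mu$, consistent with $\NICF(1/\mu)=[0;\overline{5,-5}]$.
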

\begin{proof}
The main observations for the proof are that
$$\NICF(\mu)=[5;\overline{-5, 5}],\quad
\NICF(\frac{1}{\mu})=[0,\overline{5, -5}],\quad
\NICF(1-\frac{1}{\mu})=[1,\overline{-5, 5}];$$
where the bar indicates the period, which is infinitely repeated,
and $\mu=5-\frac{1}{\mu}$. The inequalities follow,
using the standard comparison result for infinite
continued fractions:
$[a_0; a_1, a_2, \ldots, a_n, \ldots]<[a_0; a_1, a_2, \ldots, b_n, \ldots]$
if and only if either $n$ is even and $a_n<b_n$ or $n$ is odd and $a_n>b_n$.
Finally, $\frac{1}{\mu}=0.208712\cdots$ and $1-\frac{1}{\mu}=\mu-4=0.791287\cdots$.
\end{proof}

To create a Cantor set for \NICF, we define a collection 
of intervals that will be used for the gap function.

\begin{definition}\label{intervaldef}
For $a_0, a_1, \ldots, a_n \in \ZZ$ and $k \in \{-5,-4,-3,2,3,4\}$ 
such that $[a_0; a_1, \ldots, a_n, k] \in \NFS$, define 
$$P_{k^+}([a_0, a_1, \ldots, a_n]) := [a_0; a_1, \ldots, a_n, k: \mu] \in \NFS$$
and similarly, for $a_0, a_1, \ldots, a_n \in \ZZ$ and $k \in \{-4,-3,-2,3,4,5\}$ such that
$[a_0; a_1, \ldots, a_n, k] \in \NFS$, define 
$$P_{k^-}([a_0, a_1, \ldots, a_n]) := [a_0; a_1, \ldots, a_n, k: -\mu] \in \NFS.$$
Let $u \in \{-5,-4,-3,2,3,4\}$ and $v \in \{-4,-3,-2,3,4,5\}$, 
with $u<v$, and $a_0,a_1, \dots, a_n \in \ZZ$ such that
$[a_0; a_1, \ldots, a_n, u]$, $[a_0; a_1, \ldots, a_n, v] \in \NFS$;
then 
$T_{u,v}([a_0, a_1, \ldots a_n])$ will be the closed interval with endpoints 
$$P_{u^+}([a_0, a_1, \ldots, a_n])\quad\textrm{and}\quad P_{v^-}([a_0, a_1, \ldots, a_n]).$$
\end{definition}
\noindent
Note that if $n$ is odd then $P_{u^+}([a_0, a_1, \ldots, a_n]) < P_{v^-}([a_0, a_1, \ldots, a_n])$, while if $n$ is even, then $P_{u^+}([a_0, a_1, \ldots, a_n]) > P_{v^-}([a_0, a_1, \ldots, a_n])$.

With the intervals defined in Definition \ref{intervaldef}, we create a
Cantor set. The initial interval will be 
$\left[\frac{1}{\mu}, 1-\frac{1}{\mu}\right]=\left[ 
[0;\overline{5,-5}],[1;\overline{-5,5}]\right]$, 
which we call $T_{0,1}$.
Our gap function $\cutw$ will create remaining intervals of the types:
$$T_{0,1}, T_{2,5}, T_{3,5}, T_{-5,-2}, T_{-5,-3}, \textrm{\ and\ }
T_{b,b+1} \textrm{\ for\ } b \in \{-5,-4,-3,2,3,4\}.$$
For each of these types of interval we describe the function
$\cutw$, and will show that  the remainders again are of the
given types. 

We will also calculate (a lower bound of) the
ratio between the remainders and the size of the gap. Later on,
we will use this to derive a lower bound for the density ratio
of the Cantor set we are creating.
For this the following general considerations will be applied
to intervals $T$ of the above type, and to the gaps we will create
for those.

Let $[a_0;a_1, \ldots, a_n: I^-]$ and $[a_0;a_1, \ldots, a_n: I^+]$ 
be endpoints of an interval $T$, and let $[a_0;a_1, \ldots, a_n:C^-]$ 
and $[a_0;a_1, \ldots, a_n: C^+]$ be the endpoints of a gap, 
where $I^- < C^- < C^+ < I^+$. Here we assume that $[a_0;a_1, \ldots, a_i]$
is the \NICF-expansion of the rational numbers $p_i/q_i$, for
$1\leq i\leq n$.
It is then a standard result on continued fractions that
with $\omega = \frac{q_{n-1}}{q_n}$ 
$$\frac{\lvert I^- - C^- \rvert}{q_n^2 (I^- + \omega) (C^- + \omega)} \qquad \text{and} \qquad \frac{\lvert I^+ - C^+ \rvert}{q_n^2 (I^+ + \omega) (C^+ + \omega)}$$
are the sizes of the remainder intervals,
while the gap has size 
$$\frac{\lvert C^- - C^+ \rvert}{q_n^2 (C^- + \omega) (C^+ + \omega)}.$$ 
Here,
$\lvert\omega\rvert \le \frac{\sqrt{5} - 1}{2}$,
since all endpoints are elements of $\NCF$,  cf.~\cite[p.~378]{Hurwitz}.
The density ratio of this particular interval is then the minimum of 
$$\frac{\lvert I^- - C^- \rvert}{\lvert C^- - C^+ \rvert} \frac{(C^+ + \omega)}{(I^- + \omega)} \qquad \text{and} \qquad \frac{\lvert I^+ - C^+ \rvert}{\lvert C^- - C^+ \rvert} \frac{(C^- + \omega)}{(I^+ + \omega)}.$$

The first type to be considered is that of the single interval 
$T_{0,1}$, which can not be described in the
fashion of Definition \ref{intervaldef}. It has size 
\begin{equation}[1;\overline{-5,5}] - [0;\overline{5,-5}]=\frac{\sqrt{21} - 3}{2} - \frac{5 - \sqrt{21}}{2} = \sqrt{21} - 4 > 0.58256.\label{eq:initsize}\end{equation}
The corresponding gap will be $\cut{T_{0,1}} = ([0;2:\mu] , [1;-2:-\mu])$, which has size: $0.54725\cdots - 0.45275\cdots < 0.09451$.
The remaining intervals are $T_{2,5}([0])$ and $T_{-5,-2}([1])$, which both have size $> 0.24403$.
The density ratio corresponding with our initial interval exceeds $\frac{0.24403}{0.09451} > 2.58205$.
\begin{center}
\fbox{
\begin{tikzpicture}[style={sibling distance = 3.5cm, level distance = 1.5cm}]
  \node {\begin{tabular}{c}$I_1$\\ $T_{0,1}$\end{tabular}}
    child { node {\begin{tabular}{c}$I_2$\\ $T_{2,5}([0])$\end{tabular}}} 
    child { node {\begin{tabular}{c}$I_3$\\ $T_{-5,-2}([1])$\end{tabular}}};
\end{tikzpicture}
}
\end{center}

For the other types of interval we summarize the corresponding information
about the gap in a table and a diagram.

\begin{table}[ht]\tiny
\begin{center}
\begin{tabular}{l|ccccc}
\hline
\textrm{Interval}& $C^-$ & $C^+$ &\multicolumn{2}{c}{\textrm remaining intervals}&\textrm{density ratio}\\
\hline
$T_{0,1}$ & $[0;2:\mu]$ & $[1;-2:-\mu]$ & $T_{2,5}([0])$ & $T_{-5,-2}([1])$ & $> 2.58205 $ \\
$T_{2,3}[\bara]$ & $P_{2^+}([\bara, 2])$ & $P_{-2^-}([\bara, 3])$ & $T_{2,5}([\bara, 2])$ & $T_{-5,-2} ([\bara, 3])$ & $> 2.89191, 2.18031$ \\
$T_{3,4}[\bara]$ & $P_{2^+}([\bara, 3])$ & $P_{-2^-}([\bara, 4])$ & $T_{2,5}([\bara, 3])$ & $T_{-5,-2} ([\bara, 4])$ & $> 2.81108, 2.30709$ \\
$T_{4,5}[\bara]$ & $P_{2^+}([\bara, 4])$ & $P_{-2^-}([\bara, 5])$ & $T_{2,5}([\bara, 4])$ & $T_{-5,-2} ([\bara, 5])$ & $> 2.76375, 2.37311$ \\
$T_{-3,-2}[\bara]$ & $P_{2^+}([\bara, -3])$ & $P_{-2^-}([\bara, -2])$ & $T_{2,5}([\bara, -3])$ & $T_{-5,-2} ([\bara, -2])$ & $> 2.18031, 2.89191$ \\
$T_{-4,-3}[\bara]$ & $P_{2^+}([\bara, -4])$ & $P_{-2^-}([\bara, -3])$ & $T_{2,5}([\bara, -4])$ & $T_{-5,-2} ([\bara, -3])$ & $> 2.30709, 2.81108$ \\
$T_{-5,-4}[\bara]$ & $P_{2^+}([\bara, -5])$ & $P_{-2^-}([\bara, -4])$ & $T_{2,5}([\bara, -5])$ & $T_{-5,-2} ([\bara, -4])$ & $> 2.37311, 2.76375$ \\
$T_{2,5}[\bara]$ & $P_{3^-}([\bara])$ & $P_{3^+}([\bara])$ & $T_{2,3}([\bara])$ & $T_{3,5} ([\bara])$ & $>1.88937, 1.97434$ \\
$T_{3,5}[\bara]$ & $P_{4^-}([\bara])$ & $P_{4^+}([\bara])$ & $T_{3,4}([\bara])$ & $T_{4,5} ([\bara])$ & $>1.76035, 1.06122$ \\
$T_{-5,-2}[\bara]$ & $P_{-3^-}([\bara])$ & $P_{-3^+}([\bara])$ & $T_{-5,-3}([\bara])$ & $T_{-3,-2} ([\bara])$ & $> 1.97434, 1.88937$ \\
$T_{-5,-3}[\bara]$ & $P_{-4^-}([\bara])$ & $P_{-4^+}([\bara])$ & $T_{-5,-4}([\bara])$ & $T_{-4,-3} ([\bara])$ & $> 1.06122, 1.76035$\\
\hline
\end{tabular}
\caption{Gap, remaining intervals, and density ratio for each type of interval}
\label{table:gaps}
\end{center}
\end{table}

The intervals, gaps and remaining intervals in the example
of Figure \ref{fig:gap}
illustrate the general pattern for all cases in Table \ref{table:gaps}.
The relative order of the remaining intervals and their endpoints
depends on the parity of $n$. The density ratios follow from an easy 
calculation on the sizes of the remaining intervals.

\noindent
\begin{figure}[ht]
\fbox{
\begin{tikzpicture}[style={sibling distance = 3.9cm, level distance = 1.8cm,scale=0.9}]
  \node {\begin{tabular}{c}$I_i$\\ $T_{2,5}([\bara])$\end{tabular}}
    child { node {\begin{tabular}{c}$I_{2i}$\\ $T_{2,3}([\bara])$\end{tabular}}} 
    child { node {\begin{tabular}{c}$I_{2i+1}$\\ $T_{3,5} ([\bara])$\end{tabular}}};
\end{tikzpicture}
}
\fbox{
\begin{tikzpicture}[style={sibling distance = 3.9cm, level distance = 1.8cm,scale=0.9}]
  \node {\begin{tabular}{c}$I_i$\\ $T_{2,5}([\bara])$\end{tabular}}
    child { node {\begin{tabular}{c}$I_{2i}$\\ $T_{3,5} ([\bara])$\end{tabular}}}
    child { node {\begin{tabular}{c}$I_{2i+1}$\\ $T_{2,3}([\bara])$\end{tabular}}} ;
\end{tikzpicture}
}
\caption{Gap for $T_{2, 5}[\bara]$, with $\bara = [a_0;a_1, \ldots, a_n ]$, for $n$ odd and $n$ even}
\label{fig:gap}
\end{figure}
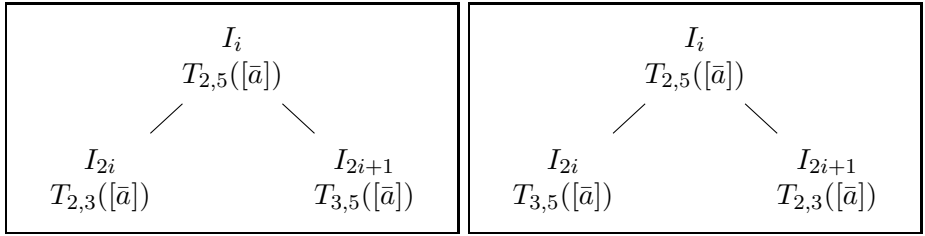

Having described the interval types and the corresponding gaps,
we now create a Cantor set.

\begin{definition}
The Cantor set $\CNF$ will be $(T_{0,1}, \cutw)$.
\end{definition}
\noindent
Note that the size of the initial interval $T_{0,1}$ is at least $0.58256$ by (\ref{eq:initsize})
and the density ratio is at least $1.06122$ (by Table \ref{table:gaps}).

\section{$\NF + \NF = \RR$}\label{sec:equal}
The next goal is to prove our main result, Theorem \ref{maintheorem},
using that $\CNF = \NFS \setminus \QQ$. This will take some preparation,
which culminates in Theorem \ref{thm:sub} and Theorem \ref{thm:sup}.

\begin{lemma} \label{TFinder}
For the intervals $I_i$ in the construction of $\CNF$, for every $n\ge 1$,
the following hold for $[a_0;a_1, \ldots, a_n] \in \NFS$:
\begin{itemize}
\item[\rm (i)] if $a_n \in \{-5,-4,-3,2,3,4\}$, then there exists some 
$i \ge 4^n$ such that \[I_i = T_{a_n,a_n+1}([a_0;a_1, \ldots,a_{n-1}]);\]
\item[\rm (ii)] if $a_n \in \{-4,-3,-2,3,4,5\}$, then there exists some 
$i \ge 4^n$ such that \[I_i = T_{a_n-1,a_n}([a_0;a_1, \ldots,a_{n-1}]).\]
\end{itemize}
\end{lemma}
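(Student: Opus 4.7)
The plan is to prove both claims simultaneously by induction on $n$, using the splitting rules listed in Table \ref{table:gaps}. The key structural observation is that every split is of one of two types. A \emph{digit-adding} split takes a narrow interval $T_{a,a+1}([\bara])$ to the pair $T_{2,5}([\bara, a])$ and $T_{-5,-2}([\bara, a+1])$, extending the prefix by one digit. A \emph{narrowing} split takes a wide interval (one of $T_{2,5}$, $T_{3,5}$, $T_{-5,-2}$, $T_{-5,-3}$) to a pair of strictly narrower intervals carrying the same prefix. Thus moving one step deeper in the prefix hierarchy costs one digit-adding split plus at most two narrowing splits --- at most three splits in all.

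The base case $n=1$ follows from a direct traversal of the tree rooted at $I_1=T_{0,1}$: the first split produces $I_2=T_{2,5}([0])$ and $I_3=T_{-5,-2}([1])$, after which one or two further splits expose every $T_{a_1,a_1+1}([a_0])$ and $T_{a_1-1,a_1}([a_0])$ permitted by the $\NFS$ rules; these all appear at an index between $4$ and $15$, in particular $\ge 4^1$. For the inductive step, given $[a_0;a_1,\ldots,a_{n+1}]\in\NFS$, apply the hypothesis to the truncation $[a_0;\ldots,a_n]$ to obtain $I_j$ of the form $T_{a_n, a_n+1}([a_0,\ldots,a_{n-1}])$ or $T_{a_n-1, a_n}([a_0,\ldots,a_{n-1}])$ with $j\ge 4^n$. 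One digit-adding split of $I_j$ then produces a wide child with prefix $[a_0,\ldots,a_n]$: the child is $T_{2,5}([a_0,\ldots,a_n])$ if we used case (i) at level $n$ and $T_{-5,-2}([a_0,\ldots,a_n])$ if we used case (ii). When both options are available (precisely when $a_n\in\{-4,-3,3,4\}$) we pick whichever sign matches $a_{n+1}$; in the corner cases $a_n\in\{-5,-2,2,5\}$ only one case applies, and a direct check against Definition \ref{def:nicfstar} confirms that the forced sign of $a_{n+1}$ agrees with the sign of the forced wide child in each of the four corners. At most two further narrowing splits of this wide interval (read off from the rows $T_{2,5}, T_{3,5}, T_{-5,-2}, T_{-5,-3}$ of Table \ref{table:gaps}) produce the required $T_{a_{n+1},a_{n+1}+1}([a_0,\ldots,a_n])$ or $T_{a_{n+1}-1,a_{n+1}}([a_0,\ldots,a_n])$. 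Since at most three splits were used below $I_j$, the index grows by a factor of at least $4$, so the final index is $\ge 4\cdot 4^n=4^{n+1}$.

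The main obstacle is this compatibility check at the hand-off between levels: the $\NFS$ rules governing admissible pairs $(a_n,a_{n+1})$ must line up with the wide-type child actually reachable from the $T$-type supplied by the induction hypothesis. This is a finite case analysis over the digit pairs appearing in Definition \ref{def:nicfstar}, and it is the only place where the extra $\NFS$ axioms (beyond the plain $\NF$ rules) are used; they are exactly what one needs in the four corner cases. Once that verification is in hand, the index bound falls out automatically, since the budget of three splits per inductive level gives an index growth factor of at least four or eight, comfortably beating the target growth factor of four per level.
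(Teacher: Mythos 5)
Your proof is correct and follows essentially the same approach as the paper: both proceed by induction on $n$, establish $n=1$ by a direct traversal of the first few levels of the tree, and in the inductive step apply the hypothesis to the prefix of length $n$ and then descend two or three more levels of the splitting tree to reach the required narrow interval, reading the splits off Table \ref{table:gaps}. The paper handles the resulting case analysis explicitly (four cases split by the sign of $a_{n+1}$ and the parity of $n$, each with a tree diagram), while you abstract the same structure into a taxonomy of ``digit-adding'' vs.\ ``narrowing'' splits and a compatibility check; the parity of $n$ only affects which child index (${2j}$ vs.\ ${2j+1}$) corresponds to which interval, and since your index bound only uses that each split at least doubles the index, you are right that it can be suppressed. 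One small imprecision: the relevant fact is that \emph{at least two} splits below $I_j$ are needed (one digit-adding plus at least one narrowing, since both targets are narrow intervals $T_{b,b+1}$), which gives growth by a factor $\ge 4$; saying ``at most three splits'' yields ``a factor of at least $4$'' has the direction backwards, though your closing sentence shows you understand the point. Your compatibility check is also a little loosely organized — the cleaner statement is that the $\NFS$ rule ``$a_{i}=2\Rightarrow a_{i+1}\ge 2$'' and its companions force $a_n$ to lie in exactly the domain of part~(i) whenever $a_{n+1}>0$ and exactly the domain of part~(ii) whenever $a_{n+1}<0$, so the right part of the hypothesis is always available and delivers the wide child of matching sign — but your reduction to a finite check over the corner digits $\{-5,-2,2,5\}$ does the same job.
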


The proofs of parts (i) and (ii) Lemma \ref{TFinder} are intertwined. 
We prove them using one induction argument.

\begin{proof} 
By induction on $n$.

\vfill\eject
{\bf Base case}: $n = 1$ 

\begin{figure}[ht]\tiny
\fbox{
\begin{tikzpicture}[level/.style={sibling distance = 2cm+1.5cm*(#1-2.5)*(#1-2.5),
  level distance = 1.3cm}]
  \node {\begin{tabular}{c}$I_1$\\ $T_{0,1}$\end{tabular}}
    child { node {\begin{tabular}{c}$I_2$\\ $T_{2,5}([0])$\end{tabular}} 
      child { node {\begin{tabular}{c}$I_4$\\ $T_{3,5}([0])$\end{tabular}} 
        child { node {\begin{tabular}{c}$I_{8}$\\ $T_{4,5}([0])$\end{tabular}} }
        child { node {\begin{tabular}{c}$I_{9}$\\ $T_{3,4}([0])$\end{tabular}} } }
      child { node {\begin{tabular}{c}$I_5$\\ $T_{2,3}([0])$\end{tabular}} } } 
    child { node {\begin{tabular}{c}$I_3$\\ $T_{-5,-2}([1])$\end{tabular}} 
      child { node {\begin{tabular}{c}$I_6$\\ $T_{-3,-2}([1])$\end{tabular}} } 
      child { node {\begin{tabular}{c}$I_7$\\ $T_{-5,-3}([1])$\end{tabular}}
        child { node {\begin{tabular}{c}$I_{14}$\\ $T_{-4,-3}([1])$\end{tabular}} }
        child { node {\begin{tabular}{c}$I_{15}$\\ $T_{-5,-4}([1])$\end{tabular}} } } };
\end{tikzpicture}
}
\end{figure}

If $a_1 \in \{-5,-4,-3,-2\}$ then $a_0 = 1$ and:
$$I_{15} = T_{-5,-4}([1]),\quad I_{14} = T_{-4,-3}([1]),\quad I_6 = T_{-3,-2}([1]).$$

If $a_1 \in \{2,3,4,5\}$ then $a_0 = 0$ and:
$$I_5 = T_{2,3}([0]),\quad I_{9} = T_{3,4}([0]),\quad I_{8} = T_{4,5}([0]).$$

Let  $[a_0;a_1, ..., a_n, a_{n+1}] \in \NFS$, so $a_{n+1} \in \{-5,-4,-3,-2,2,3,4,5\}$.

We make case distinctions based on whether $a_{n+1} \in \{-5,-4,-3,-2\}$ or $a_{n+1} \in \{2,3,4,5\}$, and whether $n$ is odd or even. 

{\bf Case 1}: $a_{n+1} \in \{-5,-4,-3,-2\}$, then $a_n \in \{-4,-3,-2,3,4,5\}$, so by induction there exists $j \ge 4^n$ such that $I_j =T_{a_n-1,a_n}([a_0;a_1, 
\ldots,a_{n-1}])$.

{\bf Case 1a}: $n$ is even

\noindent
\begin{figure}[h!]\tiny
\fbox{
\begin{tikzpicture}[level/.style={sibling distance = 5.2cm-0.5cm*#1,
  level distance = 1.5cm}]
  \node {\begin{tabular}{c}$I_j$\\ $T_{a_n-1,a_n}([a_0;a_1, ...,a_{n-1}])$\end{tabular}}
    child { node {\begin{tabular}{c}$I_{2j}$\\ $T_{2,5}([a_0;a_1, ...,a_n-1])$\end{tabular}} }
    child { node {\begin{tabular}{c}$I_{2j+1}$\\ $T_{-5,-2}([a_0;a_1, ...,a_n])$\end{tabular}} 
      child { node {\begin{tabular}{c}$I_{4j+2}$\\ $T_{-3,-2}([a_0;a_1, ...,a_n])$\end{tabular}} }
      child { node {\begin{tabular}{c}$I_{4j+3}$\\ $T_{-5,-3}([a_0;a_1, ...,a_n])$\end{tabular}}
        child { node {\begin{tabular}{c}$I_{8j+6}$\\ $T_{-4,-3}([a_0;a_1, ...,a_n])$\end{tabular}} }
        child { node {\begin{tabular}{c}$I_{8j+7}$\\ $T_{-5,-4}([a_0;a_1, ...,a_n])$\end{tabular}} } } };
\end{tikzpicture}
}
\end{figure}

\noindent
We find: $I_{8j+7} = T_{-5,-4}([a_0;a_1, ...,a_n])$,
$I_{8j+6} = T_{-4,-3}([a_0;a_1, ...,a_n])$ and 
$I_{4j+2} = T_{-3,-2}([a_0;a_1, ...,a_n])$.

\newpage
{\bf Case 1b}: $n$ is odd

\noindent
\begin{figure}[h!]\tiny
\fbox{
\begin{tikzpicture}[level/.style={sibling distance = 5.3cm-0.5cm*#1,
  level distance = 1.5cm}]
  \node {\begin{tabular}{c}$I_j$\\ $T_{a_n-1,a_n}([a_0;a_1, ...,a_{n-1}])$\end{tabular}}
    child { node {\begin{tabular}{c}$I_{2j}$\\ $T_{-5,-2}([a_0;a_1, ...,a_n])$\end{tabular}} 
      child { node {\begin{tabular}{c}$I_{4j}$\\ $T_{-5,-3}([a_0;a_1, ...,a_n])$\end{tabular}}
        child { node {\begin{tabular}{c}$I_{8j}$\\ $T_{-5,-4}([a_0;a_1, ...,a_n])$\end{tabular}} }
        child { node {\begin{tabular}{c}$I_{8j+1}$\\ $T_{-4,-3}([a_0;a_1, ...,a_n])$\end{tabular}} } }
      child { node {\begin{tabular}{c}$I_{4j+1}$\\ $T_{-3,-2}([a_0;a_1, ...,a_n])$\end{tabular}} } }
    child { node {\begin{tabular}{c}$I_{2j+1}$\\ $T_{2,5}([a_0;a_1, ...,a_n-1])$\end{tabular}} };
\end{tikzpicture}
}
\end{figure}

\noindent
From this we see
$I_{8j} = T_{-5,-4}([a_0;a_1, ...,a_n])$,
$I_{8j+1} = T_{-4,-3}([a_0;a_1, ...,a_n])$, $I_{4j+1} = T_{-3,-2}([a_0;a_1, ...,a_n])$.

\medskip
{\bf Case 2}: If $a_{n+1} \in \{2,3,4,5\}$, then $a_n \in \{-5,-4,-3,2,3,4\}$, so 
by induction there exists $j \ge 4^n$ such that $I_j =T_{a_n,a_n+1}([a_0;a_1, ...,a_{n-1}])$.

\medskip
{\bf Case 2a}: $n$ is even

\noindent
\begin{figure}[h!]\tiny
\fbox{
\begin{tikzpicture}[level/.style={sibling distance = 5.3cm-0.5cm*#1,
  level distance = 1.5cm}]
  \node {\begin{tabular}{c}$I_j$\\ $T_{a_n,a_n+1}([a_0;a_1, ...,a_{n-1}])$\end{tabular}}
    child { node {\begin{tabular}{c}$I_{2j}$\\ $T_{2,5}([a_0;a_1, ...,a_n])$\end{tabular}} 
      child { node {\begin{tabular}{c}$I_{4j}$\\ $T_{3,5}([a_0;a_1, ...,a_n])$\end{tabular}} 
        child { node {\begin{tabular}{c}$I_{8j}$\\ $T_{4,5}([a_0;a_1, ...,a_n])$\end{tabular}} }
        child { node {\begin{tabular}{c}$I_{8j+1}$\\ $T_{3,4}([a_0;a_1, ...,a_n])$\end{tabular}} } }
      child { node {\begin{tabular}{c}$I_{4j+1}$\\ $T_{2,3}([a_0;a_1, ...,a_n])$\end{tabular}} } }
    child { node {\begin{tabular}{c}$I_{2j+1}$\\ $T_{-5,-2}([a_0;a_1, ...,a_n+1])$\end{tabular}} };
\end{tikzpicture}
}
\end{figure}

\noindent
We find in this case $I_{4j+1} = T_{2,3}([a_0;a_1, ...,a_n])$,
$I_{8j} = T_{3,4}([a_0;a_1, ...,a_n])$ and 
$I_{8j+1} = T_{4,5}([a_0;a_1, ...,a_n])$.

\newpage
{\bf Case 2b}: $n$ is odd

\noindent
\begin{figure}[h!]\tiny
\fbox{
\begin{tikzpicture}[level/.style={sibling distance = 5.3cm-0.5cm*#1,
  level distance = 1.5cm}]
  \node {\begin{tabular}{c}$I_j$\\ $T_{a_n,a_n+1}([a_0;a_1, ...,a_{n-1}])$\end{tabular}}
    child { node {\begin{tabular}{c}$I_{2j}$\\ $T_{-5,-2}([a_0;a_1, ...,a_n+1])$\end{tabular}} }
    child { node {\begin{tabular}{c}$I_{2j+1}$\\ $T_{2,5}([a_0;a_1, ...,a_n])$\end{tabular}} 
      child { node {\begin{tabular}{c}$I_{4j+2}$\\ $T_{2,3}([a_0;a_1, ...,a_n])$\end{tabular}} }
      child { node {\begin{tabular}{c}$I_{4j+3}$\\ $T_{3,5}([a_0;a_1, ...,a_n])$\end{tabular}} 
        child { node {\begin{tabular}{c}$I_{8j+6}$\\ $T_{3,4}([a_0;a_1, ...,a_n])$\end{tabular}} }
        child { node {\begin{tabular}{c}$I_{8j+7}$\\ $T_{4,5}([a_0;a_1, ...,a_n])$\end{tabular}} } } };
\end{tikzpicture}
}
\end{figure}

\noindent
Here we find: $I_{4j+2} = T_{2,3}([a_0;a_1, ...,a_n])$,
$I_{8j+6} = T_{3,4}([a_0;a_1, ...,a_n])$ and
$I_{8j+7} = T_{4,5}([a_0;a_1, ...,a_n])$.

This concludes our proof.
\end{proof}

\begin{theorem}\label{thm:sub}
  $\NFS \setminus \QQ \subseteq \CNF$
\end{theorem}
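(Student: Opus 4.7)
Let $x\in\NFS\setminus\QQ$ have NICF expansion $[a_0;a_1,a_2,\ldots]$; irrationality forces the expansion to be infinite, and every prefix $[a_0;a_1,\ldots,a_n]$ is again an admissible $\NFS$-word. The plan is to locate, for each $n\ge 1$, an interval $I_{j_n}$ from the Cantor construction of $\CNF$ that contains $x$, with index $j_n\ge 4^n$. Because the total gap length is finite, $\lng{I_{j_n}}\to 0$ as $j_n\to\infty$, so $x$ lies outside every gap of $\CNF$ and therefore $x\in\CNF$.

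Existence of $I_{j_n}$ is exactly what Lemma \ref{TFinder} supplies: it produces $I_{j_n}=T_{u,v}([a_0;a_1,\ldots,a_{n-1}])$ at depth $j_n\ge 4^n$, where $(u,v)=(a_n,a_n+1)$ in case (i) of the lemma and $(u,v)=(a_n-1,a_n)$ in case (ii). The two endpoints are $P_{u^+}([a_0,\ldots,a_{n-1}])$ and $P_{v^-}([a_0,\ldots,a_{n-1}])$, whose $n$-th NICF tail values are $u+\tfrac{1}{\mu}$ and $v-\tfrac{1}{\mu}$ respectively. Writing $x=[a_0;a_1,\ldots,a_{n-1},T_n]$ with $T_n=[a_n;a_{n+1},\ldots]$, the monotonicity of the M\"obius map $t\mapsto[a_0;a_1,\ldots,a_{n-1},t]$ on the relevant half-line reduces the containment $x\in I_{j_n}$ to the tail inequality $T_n\in[u+\tfrac{1}{\mu},\,v-\tfrac{1}{\mu}]$.

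This tail inequality is the only substantive step. It follows from a short case analysis on $a_n$, using two universal facts: the extremal bound $|T|\le\mu$ on every NICF$_5$-tail (attained precisely at the periodic points $\pm\mu=[\pm5;\overline{\mp5,\pm5}]$), together with the NICF$_5$ compatibility rules at $\pm 2$ and the additional $\NFS$-rule forcing a sign change after $\pm 5$. In each of the six possible ranges for $a_n$ one verifies directly that $T_n=a_n+1/T_{n+1}$ sits in $[u+\tfrac{1}{\mu},\,v-\tfrac{1}{\mu}]$: the upper bound $|T_{n+1}|\le\mu$ gives $|T_n-a_n|\ge\tfrac{1}{\mu}$ on one side, and the compatibility rule pins down the sign of $1/T_{n+1}$ to place $T_n$ on the correct side of $a_n$. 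The main obstacle is simply the bookkeeping across these cases; no individual case is more than a few lines of arithmetic. Once $x\in I_{j_n}$ is established for every $n$, the length bound $\lng{I_{j_n}}\to 0$ (implied by $j_n\ge 4^n\to\infty$) completes the argument, placing $x$ in $\bigcap_n I_{j_n}\subseteq\CNF$.
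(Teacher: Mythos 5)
Your approach matches the paper's: both invoke Lemma \ref{TFinder} to exhibit, at every depth $n$, a Cantor interval $I_{j_n}$ with $j_n\ge 4^n$ containing $x$, and then conclude $x$ escapes every gap. The paper phrases this as a proof by contradiction (suppose $x\in \cut{I_i}$, pick $n$ with $4^n>i$, find $I_j\ni x$ with $j>i$, and note $I_j\cap\cut{I_i}=\emptyset$), whereas you argue directly that $x\in\bigcap_n I_{j_n}\subseteq\CNF$; these are the same argument. You also spell out, via the tail inequality $T_n\in[u+\tfrac{1}{\mu},v-\tfrac{1}{\mu}]$, why $x$ actually lies in the $T$-interval produced by Lemma \ref{TFinder} — a step the paper asserts (``and $x\in I_j$'') without detail — and your case check for that inequality is sound.

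One claim in your write-up is wrong and should be dropped: you assert that ``because the total gap length is finite, $\lng{I_{j_n}}\to 0$ as $j_n\to\infty$,'' later attributing this to $j_n\ge 4^n\to\infty$. Neither inference is valid: finiteness of $\sum_i\lngcs{i}$ forces $\lngcs{i}\to 0$, but it says nothing about the remaining interval lengths, and $j_n\to\infty$ does not by itself shrink $\lng{I_{j_n}}$. Fortunately you do not need it. The containment $\bigcap_n I_{j_n}\subseteq\CNF$ holds regardless: for any gap $\cut{I_i}$, choose $n$ with $j_n$ at a strictly deeper level than $i$ (any $n$ with $4^n>i$ will do); then $I_{j_n}$, being either a descendant of $I_{2i}$ or $I_{2i+1}$ or disjoint from $I_i$, is disjoint from $\cut{I_i}$, so $\bigcap_n I_{j_n}$ misses $\cut{I_i}$. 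Replacing the false length claim with this index/level argument makes the last paragraph correct and brings it exactly in line with the paper's reasoning.
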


\begin{proof} For every $x = [a_0;a_1,...] \in \NFS \setminus \QQ$ we have $[0:\mu] \le x \le [1:-\mu]$, so $x \in T_{0,1}$. 

We continue by showing that there exists no $i$ such that $x \in \cut{I_i}$.

Suppose that there exists an $i$ such that $x \in \cut{I_i}$. Take $n \ge 2$ such that $4^n > i$. We know that $[a_0;a_1,...,a_{n+1}] \in \NFS$.
We make a case distinction whether $a_{n+1} \in \{2,3,4,5\}$ or $a_{n+1} \in \{-5,-4,-3,-2\}$.

Case 1: If $a_{n+1} \in \{2,3,4,5\}$, then $a_n \in \{-5,-4,-3,2,3,4\}$. So by Lemma \ref{TFinder}(i) there exists $j \ge 4^n > i$ with $I_j = T_{a_n,a_n+1}([a_0;a_1,...,a_n-1])$, and $x \in I_j$. Because $j > i$, we have $\cuts{i} \cap I_j = \emptyset$. Contradiction.

Case 2: If $a_{n+1} \in \{-5,-4,-3,-2\}$, then $a_n \in \{-4,-3,-2,3,4,5\}$. By Lemma  \ref{TFinder}(ii) there exists $j \ge 4^n > i$ with $I_j = T_{a_n-1,a_n}([a_0;a_1,...,a_n-1])$, and $x \in I_j$. Because $j > i$, we have $\cuts{i} \cap I_j = \emptyset$. Contradiction.
\end{proof}
As a first step towards proving $\CNF \subseteq \NFS \setminus \QQ$ (Theorem \ref{thm:sup}) we show that for every $x \notin \NFS \setminus \QQ$ we have $x \notin T_{0,1}$ or there exists an $i$ such that $x \in \cut{I_i}$.
For $x \notin \NFS$ we make repeated use of the rules following
Definition \ref{def:nicfstar}.

\begin{proposition} \label{findNICF}
  Given $x = [a_0;a_1, \ldots, a_{n-1}:a_n + r]$, with $0 \le x \le 1$ and $\lvert r \rvert \le \frac{1}{\mu}$. If for each $i \le n$, $\lvert [a_i;a_{i+1}, a_{i+2}, \ldots: a_n + r] \rvert \le \mu$, then:
  $[a_0;a_1, \ldots, a_{n-1},a_n] \in \NFS$ and if $n \ge 1$, then $a_n \in \{-4,-3,-2,2,3,4\}$.
\end{proposition}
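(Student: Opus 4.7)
The plan is to prove Proposition~\ref{findNICF} by induction on $n$, establishing both conclusions simultaneously.

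For the base case $n=0$ the expansion collapses to $x=a_0+r$, and the hypotheses $0\le x\le 1$ and $|r|\le 1/\mu<1/2$ immediately force $a_0=x-r\in(-1/2,3/2)\cap\ZZ=\{0,1\}$, which is exactly the $\NFS$ constraint on the leading digit, so $[a_0]\in\NFS$; the second conclusion is vacuous.

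For the inductive step $(n\ge 1)$, write $y_i=[a_i;a_{i+1},\ldots:a_n+r]$ so that $|y_i|\le\mu$ by hypothesis. Since $y_n=a_n+r$ and $|r|<1/2$, $a_n$ must be the nearest integer to $y_n$, and the bounds give $|a_n|\le|y_n|+|r|\le\mu+1/\mu=5$, so $a_n\in\{-5,\ldots,5\}$. The main combinatorial work is to sharpen this to $a_n\in\{-4,-3,-2,2,3,4\}$. I rule out $a_n\in\{-1,0,1\}$ by noting that in those cases $|y_n|\le 1+1/\mu$ makes $|1/y_n|$ large; combined with $y_{n-1}=a_{n-1}+1/y_n$ and $|y_{n-1}|\le\mu$, this pins $a_{n-1}$ to a narrow set of integers of opposite sign to $1/y_n$, and iterating the same observation backward along the chain $y_{n-2},\ldots,y_0=x$ (using $|y_i|\le\mu$ at every level) eventually contradicts $x\in[0,1]$. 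The extremal cases $a_n=\pm5$ force $y_n=\pm\mu$ and $r=\mp1/\mu$ exactly, and the same backward propagation forces the whole tail onto the periodic orbit $\pm\mu=[5;\overline{-5,5}]$, which is incompatible with $x$ being strictly interior to $[0,1]$.

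Having established $a_n\in\{-4,-3,-2,2,3,4\}$, I reduce to the inductive hypothesis by rewriting $x=[a_0;a_1,\ldots,a_{n-2}:a_{n-1}+r']$ with $r'=1/y_n$; the complete quotients at levels $i\le n-1$ are unchanged, so the bound $|y_i|\le\mu$ transfers verbatim to the shorter expansion. Applying the inductive hypothesis yields $[a_0;\ldots,a_{n-1}]\in\NFS$, and the NFS rule on the consecutive pair $(a_{n-1},a_n)$---the $\pm2$ rule and the $\pm5$ sign rule---is verified from the sign of $r'$, which is determined by the signs of $a_n$ and $r$.

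The main obstacle I expect is precisely this reduction step: a priori $|r'|=1/|y_n|\ge 1/\mu$, the opposite inequality to what the inductive hypothesis expects of its perturbation. Overcoming this requires combining the upper bound $|y_i|\le\mu$ at every level together with $a_n\in\{-4,\ldots,4\}$ and the NFS sign constraints to see that $|r'|\le 1/\mu$ in the cases that remain; the boundary $|r'|=1/\mu$ corresponds exactly to the Cantor-set gap endpoints of $\CNF$ and must be handled with care, and it is here that the full force of the hypothesis---rather than just a bound at the tail---is used.
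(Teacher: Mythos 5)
Your induction cannot close, and the obstruction is structural, not an edge case. You identify it yourself in the last paragraph, but the hope that combining the upper bounds $|y_i|\le\mu$ with $a_n\in\{-4,\ldots,4\}$ will force $|r'|\le 1/\mu$ is simply false. With $r'=1/y_n$ and $y_n=a_n+r$, the constraints $a_n\in\{-4,-3,-2,2,3,4\}$ and $|r|\le 1/\mu$ give only $|y_n|\ge 2-\frac{1}{\mu}$, so $|r'|\le\frac{1}{2-1/\mu}\approx 0.558$; for example $a_n=2$, $r=0$ gives $|r'|=\frac12\gg\frac1\mu\approx 0.209$. The perturbation you must feed to the inductive hypothesis is therefore typically \emph{larger} than what the hypothesis tolerates, and not by a little: the bound $|r|\le 1/\mu$ does not propagate backward along the expansion, because in the NICF the remainder at one level is $1/y_{i+1}$ and $|y_{i+1}|$ need not be anywhere near $\mu$.

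For this reason the paper does not induct at all; it verifies the $\NFS$ conditions digit by digit, observing that the two hypotheses play asymmetric roles. The level-wise bound $|y_i|\le\mu$ yields $|a_i|\le 5$ for $i\ge 1$ (else $|y_i|\ge 6-\tfrac12>\mu$) and $a_i=\pm5\Rightarrow\sign a_{i+1}=\mp1$ for $i<n$ (else $|y_i|>5>\mu$); the range of $a_0$ and the sign of $a_1$ come directly from $0\le x\le 1$. The tail bound $|r|\le 1/\mu$ is a one-shot constraint used only to exclude $a_n=\pm5$, via $|a_n+r|\ge 5-\tfrac1\mu=\mu$ (the paper's application supplies the strict inequality $|r|<1/\mu$ that this last step actually needs). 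If you insist on an inductive write-up, the correct statement to induct on must drop both the hypothesis $|r|\le 1/\mu$ and the conclusion about $a_n$, keeping only $|y_i|\le\mu$ for $i\le n$ and $0\le x\le 1$; the extra claim $a_n\in\{-4,\ldots,-2,2,\ldots,4\}$ is then appended afterward by the single tail estimate. Also note that ruling out $a_n\in\{-1,0,1\}$ requires no argument, since the $a_i$ are NICF digits of $x$ and the algorithm never emits those values in positions $i\ge 1$; the backward-propagation sketch you offer there is extraneous.
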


\begin{proof}
  We will write $r_i$ (remainder) for $x_i - a_i$ in the construction of $\NCF$, so $r_i = \frac{1}{[a_{i+1};a_{i+2}, \ldots]}$ and $x = [a_0;a_1,\ldots,a_{i-1}:a_i+r_i]$. Note that for every $i$ we have $\lvert r_i \rvert \le \frac{1}{2}$.

  Suppose that $x \notin \NFS$. Each of the following cases then leads to (at least) one contradiction:
  \begin{enumerate}
      \item $a_0 \notin \{0,1\}$: either $a_0 \le -1$, and then $x = a_0 + r_0 \le -\frac{1}{2} < 0$, or $a_0 \ge 2$, and $x = a_0 + r_0 \ge \frac{3}{2} > 1$;
      \item $a_0 = 0$ and $a_{1} < 0$: then $r_0 < 0$, so $x = a_0 + r_0 < 0$;
      \item $a_0 = 1$ and $a_{1} > 0$: then $r_0 > 0$, so $x = a_0 + r_0 > 1$;
      \item $\exists_{1 \le i \le n}$ with $\lvert a_i \rvert \ge 6$: $\lvert [a_i;a_{i+1}, \ldots, a_n] \rvert > \lvert a_i \rvert - r_i \ge 6-\frac{1}{2} > \mu$;
      \item $\exists_{1 \le i < n}$ with $a_i = 5$ and $a_{i+1} > 0$: $r_i > 0$ and $[a_i;a_{i+1}, \ldots, a_n] > 5 > \mu$;
      \item $\exists_{1 \le i < n}$ with $a_i = -5$ and $a_{i+1} < 0$: $r_i < 0$ and $[a_i;a_{i+1}, \ldots, a_n] < -5 < -\mu$.
  \end{enumerate}
  Furthermore, if $\lvert a_n \rvert = 5$, then (take $i = n$) $\lvert [a_n + r] \rvert > 5-\frac{1}{\mu} = \mu$. 
\end{proof}

\begin{proposition} \label{findGap}
  If $x = [a_0;a_1,\ldots,a_n] \in \NFS$ with $n \ge 1$ and $a_n \in \{-4,-3,-2,2,3,4\}$, then 
  for $y = [a_0;a_1,\ldots,a_{n-1}:a_n+r]$ with $\lvert r \rvert < \frac{1}{\mu}$, there exists some $i$ such that $y \in \cut{I_i}$.
\end{proposition}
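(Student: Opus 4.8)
The plan is to show that $y$, which differs from $x=[a_0;a_1,\ldots,a_n]\in\NFS$ only in its tail beyond position $n$, lands inside one of the gap intervals $\cut{I_i}$ created by our gap function. The key point is that $x$ itself is an endpoint of a $T$-type interval (namely the point $[a_0;a_1,\ldots,a_{n-1},a_n]$ appears as a $P_{a_n^{\pm}}$-endpoint in Definition \ref{intervaldef}), and the perturbation $|r|<\frac1\mu$ keeps $y$ within a controlled neighborhood of that endpoint. First I would use Lemma \ref{TFinder}: since $a_n\in\{-4,-3,-2,2,3,4\}$, one of the two alternatives applies (for $a_n\in\{2,3,4\}$ use part (i) to locate $I_i=T_{a_n,a_n+1}([a_0;\ldots,a_{n-1}])$; for $a_n\in\{-4,-3,-2\}$ use part (ii) to locate $I_i=T_{a_n-1,a_n}([a_0;\ldots,a_{n-1}])$), and in fact for $a_n\in\{-2,2\}$ both parts are available. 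Having fixed such an interval $I_i$, I would consult Table \ref{table:gaps} to read off exactly where the gap $\cut{I_i}$ sits: its endpoints are of the form $P_{(a_n\pm1)^{\mp}}([a_0;\ldots,a_{n-1},\cdot])$, and crucially the point $[a_0;a_1,\ldots,a_n]=P_{a_n^{\pm}}([a_0;\ldots,a_{n-1}])\big|_{\text{tail}=0}$ compared against these — one checks that $[a_0;a_1,\ldots,a_{n-1}:a_n+r]$ for $|r|<\frac1\mu$ stays strictly between the two gap endpoints.

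The heart of the argument is the following elementary continued-fraction estimate, which I would carry out in the coordinates introduced just before the definition of $\CNF$: writing $\omega=q_{n-1}/q_n$ with $|\omega|\le\frac{\sqrt5-1}{2}$, the value $[a_0;a_1,\ldots,a_{n-1}:t]$ is a Möbius function of $t$ of the form $\frac{p_{n-1}t+p_{n-2}}{q_{n-1}t+q_{n-2}}$, which is monotone in $t$ (with sign depending on the parity of $n$), so it suffices to track the single real parameter $t=a_n+r$. The gap $\cut{I_i}$ corresponds, in the $t$-coordinate, to the open interval between $t=a_n$ "approached from one side via $\mu$" and the neighboring digit's analogous endpoint; concretely, for $a_n\ge2$ the gap $T_{a_n,a_n+1}$ is cut at $C^-=P_{2^+},\ C^+=P_{-2^-}$ over the one-longer prefixes, which in the $t$-parameter means $t$ ranging over an interval containing $(a_n-\tfrac1\mu,\,a_n+\tfrac1\mu)$ minus the admissible sub-intervals; the condition $|r|<\frac1\mu$ is exactly calibrated so that $a_n+r$ falls in the removed middle portion rather than in one of the two surviving sub-intervals $[a_0;\ldots,a_{n-1}:a_n+[\text{valid tail}]]$. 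I would verify this by the same explicit numerics already displayed for $T_{0,1}$ (where the gap $([0;2{:}\mu],[1;-2{:}-\mu])$ has the endpoint structure forcing any $y=[0{:}0+r]$ or $y=[1{:}0+r']$-type perturbation into the hole), and note the general case is identical after applying the Möbius change of variables, with the $|\omega|\le\frac{\sqrt5-1}{2}$ bound guaranteeing the distortion factors $(C^\pm+\omega)/(I^\pm+\omega)$ do not spoil the inclusion.

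I expect the main obstacle to be bookkeeping the parity of $n$ together with the sign of $a_n$: the relative left/right order of the remaining intervals $T_{2,5}$ and $T_{-5,-2}$ flips with the parity of $n$ (as already flagged after Definition \ref{intervaldef} and in Figure \ref{fig:gap}), so the four combinations $(a_n\gtrless0)\times(n\text{ odd/even})$ must each be checked to confirm that $a_n+r$ with $|r|<\frac1\mu$ lands in $\cut{I_i}$ and not in an adjacent interval. A clean way to organize this is to reduce everything to the inequality, valid for the endpoints, that $[a_0;a_1,\ldots,a_{n-1}:s]$ lies between $[a_0;\ldots,a_{n-1}:a_n\mp\tfrac1\mu]$ (i.e., the $P_{a_n^{\pm}}$ endpoints over the current prefix) precisely when $|s-a_n|<\tfrac1\mu$, together with the observation that the gap endpoints recorded in Table \ref{table:gaps} are themselves of the form $[a_0;\ldots,a_{n-1}:a_n+\varepsilon]$ with $|\varepsilon|=\tfrac1\mu$ — modulo the sign conventions encoded in the superscripts $\pm$ on $P$. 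Once that single-variable inequality is in place, each of the four cases is a one-line verification, and the proposition follows.
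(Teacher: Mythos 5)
Your strategy misidentifies which gap $y$ lands in, and as stated the key inclusion fails. You propose to apply Lemma \ref{TFinder} with the digit $a_n$ to locate $I_i = T_{a_n,a_n+1}([a_0;\ldots,a_{n-1}])$ and then claim $y\in\cut{I_i}$. But the gap of $T_{a_n,a_n+1}([\bara])$ has endpoints $P_{2^+}([\bara,a_n]) = [\bara:a_n + \tfrac{1}{2+1/\mu}]$ and $P_{-2^-}([\bara,a_n+1]) = [\bara:a_n+1-\tfrac{1}{2+1/\mu}]$, so in the $t$-coordinate this gap is (roughly) $t\in(a_n+0.45,\ a_n+0.55)$: it sits between the two child prefixes $[\bara,a_n,\ldots]$ and $[\bara,a_n+1,\ldots]$, i.e.\ near $t=a_n+\tfrac12$, not near $t=a_n$. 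Meanwhile $y=[\bara:a_n+r]$ with $|r|<\tfrac1\mu$ corresponds to $t\in(a_n-0.21,\ a_n+0.21)$. These ranges are disjoint, so your claim that ``the gap $\cut{I_i}$ \ldots\ in the $t$-parameter means $t$ ranging over an interval containing $(a_n-\tfrac1\mu,a_n+\tfrac1\mu)$'' is incorrect; in fact for $a_n=2$ the point $y$ is not even contained in the interval $T_{2,3}([\bara])$, whose $t$-range starts at $2+\tfrac1\mu$.

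The correct gap depends on $|a_n|$ and lives at a different place in the tree. Your observation that $y$ lies strictly between $P_{a_n^-}([\bara])$ and $P_{a_n^+}([\bara])$ is right, but to exploit it you must match that pair of endpoints against Table \ref{table:gaps}: for $a_n=3$ they are the gap endpoints of $T_{2,5}([\bara])$, for $a_n=4$ of $T_{3,5}([\bara])$, and analogously for $a_n=-3,-4$ with $T_{-5,-2}$, $T_{-5,-3}$. There is no interval whose gap has both endpoints $P_{2^\pm}([\bara])$, so $a_n=\pm 2$ must be treated differently: there the paper first invokes the constraint $|a_n+r|\ge 2$ (a consequence of the NICF construction, not of $|r|<\tfrac1\mu$ alone) to get a one-sided bound, and shows $y$ falls in the gap of the parent interval $T_{a_{n-1},a_{n-1}\pm1}([a_0;\ldots,a_{n-2}])$, since $y$ lies in that parent but in neither of its two children. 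Accordingly, the paper applies Lemma \ref{TFinder} with the digit $a_{n-1}$, not $a_n$, and then descends zero, one, or two levels in the tree according to $|a_n|\in\{2,3,4\}$, keeping track of the parity of $n$ and the sign of $a_n$. Your reduction to a single $t$-coordinate inequality is a reasonable organizing device, but it needs to be pointed at the right $I_i$, and it does not by itself cover the $|a_n|=2$ case, which requires the extra NICF constraint.
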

  
\begin{proof}
  Note that, by construction of $\NCF$, $\lvert a_n+r \rvert \ge 2$.
  \begin{itemize}
      \item If $n = 1$ and $a_1 \in \{2,3,4\}$, then $a_0 = 0$. Since $[0:4+\frac{1}{2}] \le y \le [0;2]$, we know that $y \in I_1$, and that $y \notin I_3$:
      \begin{itemize}
          \item[$\circ$] $a_1 = 2$: $y > [0;2:\mu] = P_{2^+}([0])$, so $y \notin I_2$, thus $y \in \cut{I_1}$;
          \item[$\circ$] $a_1 = 3$: $[0;3:\mu] < y < [0;3:-\mu]$, so $y \in \cut{I_2}$;
          \item[$\circ$] $a_1 = 4$: $[0;4:\mu] < y < [0;4:-\mu]$, so $y \in \cut{I_4}$.
      \end{itemize}
      \item If $n = 1$ and $a_1 \in \{-4,-3,-2\}$, then $a_0 = 1$. Since $[1;-2] \le y \le [1:-4-\frac{1}{2}]$, we know that $y \in I_1$, and that $y \notin I_2$:
      \begin{itemize}
          \item[$\circ$] $a_1 = -2$: $y < [1;-2:-\mu] = P_{-2^-}([1])$, so $y \notin I_3$, thus $y \in \cut{I_1}$;
          \item[$\circ$] $a_1 = -3$: $[1;-3:\mu] < y < [1;-3:-\mu]$, so $y \in \cut{I_3}$;
          \item[$\circ$] $a_1 = -4$: $[1;-4:\mu] < y < [1;-4:-\mu]$, so $y \in \cut{I_7}$.
      \end{itemize}
      \item If $n > 1$, $n$ even and $a_1 \in \{2,3,4\}$, then $a_{n-1} \in \{-5,-4,-3,2,3,4\}$, thus there exists an $i$ such that $I_i = T_{a_{n-1},a_{n-1}+1}([a_0;a_1,\ldots,a_{n-2}])$.
      Since $[a_0;a_1,\ldots,a_{n-1}:4+\frac{1}{2}] < y \le [a_0;a_1,\ldots,a_{n-1},2]$, we know that $y \in I_i$ and that $y \notin I_{2i+1}$:
      \begin{itemize}
          \item[$\circ$] $a_1 = 2$: $[a_0;a_1,\ldots,a_{n-1},2:\mu] < y < [a_0;a_1,\ldots,a_{n-1},2]$, so $y \notin I_{2i}$, thus $y \in \cut{I_i}$;
          \item[$\circ$] $a_1 = 3$: $[a_0;a_1,\ldots,a_{n-1},3:\mu] < y < [a_0;a_1,\ldots,a_{n-1},3:-\mu]$, so $y \in \cut{I_{2i}}$;
          \item[$\circ$] $a_1 = 4$: $[a_0;a_1,\ldots,a_{n-1},4:\mu] < y < [a_0;a_1,\ldots,a_{n-1},4:-\mu]$, so $y \in \cut{I_{4i}}$.
      \end{itemize}
      \item If $n > 1$, $n$ odd and $a_1 \in \{2,3,4\}$, then $a_{n-1} \in \{-5,-4,-3,2,3,4\}$, thus there exists an $i$ such that $I_i = T_{a_{n-1},a_{n-1}+1}([a_0;a_1,\ldots,a_{n-2}])$.
      Since $[a_0;a_1,\ldots,a_{n-1},2] \le y < [a_0;a_1,\ldots,a_{n-1}:4+\frac{1}{2}]$, we know that $y \in I_i$ and that $y \notin I_{2i}$:
      \begin{itemize}
          \item[$\circ$] $a_1 = 2$: $[a_0;a_1,\ldots,a_{n-1},2] \le y < [a_0;a_1,\ldots,a_{n-1},2:\mu]$, so $y \notin I_{2i+1}$, thus $y \in \cut{I_i}$;
          \item[$\circ$] $a_1 = 3$: $[a_0;a_1,\ldots,a_{n-1},3:-\mu] < y < [a_0;a_1,\ldots,a_{n-1},3:\mu]$, so $y \in \cut{I_{2i+1}}$;
          \item[$\circ$] $a_1 = 4$: $[a_0;a_1,\ldots,a_{n-1},4:-\mu] < y < [a_0;a_1,\ldots,a_{n-1},4:\mu]$, so $y \in \cut{I_{4i+3}}$.
      \end{itemize}
      \item If $n > 1$ even and $a_1 \in \{-4,-3,-2\}$, then $a_{n-1} \in \{-4,-3,-2,3,4,5\}$, thus there exists an $i$ such that $I_i = T_{a_{n-1}-1,a_{n-1}}([a_0;a_1,\ldots,a_{n-2}])$.
      Since $[a_0;a_1,\ldots,a_{n-1}:-4-\frac{1}{2}] < y \le [a_0;a_1,\ldots,a_{n-1},-2]$, we know that $y \in I_i$ and that $y \notin I_{2i}$:
      \begin{itemize}
          \item[$\circ$] $a_1 = -2$: $[a_0;a_1,\ldots,a_{n-1},-2:-\mu] < y \le [a_0;a_1,\ldots,a_{n-1},-2]$, so $y \notin I_{2i+1}$, thus $y \in \cut{I_i}$;
          \item[$\circ$] $a_1 = -3$: $[a_0;a_1,\ldots,a_{n-1},-3:\mu] < y < [a_0;a_1,\ldots,a_{n-1},-3:-\mu]$, so $y \in \cut{I_{2i+1}}$;
          \item[$\circ$] $a_1 = -4$: $[a_0;a_1,\ldots,a_{n-1},-4:\mu] < y < [a_0;a_1,\ldots,a_{n-1},-4:-\mu]$, so $y \in \cut{I_{4i+3}}$.
      \end{itemize}
      \item If $n > 1$ odd and $a_1 \in \{-4,-3,-2\}$, then $a_{n-1} \in \{-4,-3,-2,3,4,5\}$, thus there exists an $i$ such that $I_i = T_{a_{n-1}-1,a_{n-1}}([a_0;a_1,\ldots,a_{n-2}])$.
      Since $[a_0;a_1,\ldots,a_{n-1},-2] \le y < [a_0;a_1,\ldots,a_{n-1}:-4-\frac{1}{2}]$, we know that $y \in I_i$ and that $y \notin I_{2i+1}$:
      \begin{itemize}
          \item[$\circ$] $a_1 = -2$: $[a_0;a_1,\ldots,a_{n-1},-2] \le y < [a_0;a_1,\ldots,a_{n-1},-2:-\mu]$, so $y \notin I_{2i}$, thus $y \in \cut{I_i}$;
          \item[$\circ$] $a_1 = -3$: $[a_0;a_1,\ldots,a_{n-1},-3:-\mu] < y < [a_0;a_1,\ldots,a_{n-1},-3:\mu]$, so $y \in \cut{I_{2i}}$;
          \item[$\circ$] $a_1 = -4$: $[a_0;a_1,\ldots,a_{n-1},-4:-\mu] < y < [a_0;a_1,\ldots,a_{n-1},-4:\mu]$, so $y \in \cut{I_{4i}}$.
      \end{itemize}
  \end{itemize}
\end{proof}

\begin{proposition} \label{nmu}
  If $x = [a_0;a_1,\ldots] \in \RR \setminus  (\QQ \cap \NFS)$ then $x < 0$ or $x > 1$ or there exists an $n>0$ such that $\lvert[a_n;a_{n+1},a_{n+2},\ldots]\rvert > \mu$.
\end{proposition}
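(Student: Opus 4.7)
The plan is to reduce the statement to a case analysis on which defining condition of $\NFS$ (listed just after Definition \ref{def:nicfstar}) is violated by $x$. If $x < 0$ or $x > 1$ the conclusion holds immediately, so I would assume $0 \le x \le 1$. For $x$ in this range, the NICF algorithm automatically produces $a_0 \in \{0, 1\}$, and further forces $a_1 \ge 2$ when $a_0 = 0$ and $a_1 \le -2$ when $a_0 = 1$; the first three bullets in the definition of $\NFS$ are therefore satisfied for free. A failure of $x$ to lie in $\NFS$ must consequently come from one of the remaining three conditions, providing some $n \ge 1$ with either (i) $|a_n| \ge 6$, or (ii) $a_n = 5$ with $a_{n+1} \in \{2,3,4,5\}$, or (iii) $a_n = -5$ with $a_{n+1} \in \{-2,-3,-4,-5\}$.

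In each case the tail $[a_n; a_{n+1}, a_{n+2}, \ldots] = a_n + 1/[a_{n+1}; a_{n+2}, \ldots]$ can be bounded by hand, using that the NICF requirement $|a_{n+1}| \ge 2$ forces $|1/[a_{n+1}; \ldots]| \le \tfrac{1}{2}$. In case (i) one obtains $|[a_n; a_{n+1}, \ldots]| \ge |a_n| - \tfrac{1}{2} \ge \tfrac{11}{2} > \mu$. In case (ii) the sub-tail $[a_{n+1}; \ldots]$ is positive and so $[a_n; a_{n+1}, \ldots] > 5 > \mu$. Case (iii) is symmetric and gives $[a_n; a_{n+1}, \ldots] < -5 < -\mu$, so $|[a_n; a_{n+1}, \ldots]| > \mu$. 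In all three situations the desired index $n$ has been produced.

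There is no real difficulty in the argument; it is essentially the contrapositive of the case analysis already carried out inside the proof of Proposition \ref{findNICF}, and the only work is to enumerate the possibilities carefully. The one bookkeeping point concerns the rational case, where the NICF expansion $x = [a_0; a_1, \ldots, a_N]$ terminates at some finite $N$. The same three cases still cover every possible failure of the $\NFS$ conditions; in the degenerate sub-case where $a_N = \pm 5$ and the sign condition fails vacuously because no $a_{N+1}$ exists, one takes $n = N$, and the tail is literally $a_N$ itself, so $|a_N| = 5 > \mu$ still delivers the conclusion. The hardest step to state cleanly, rather than prove, is therefore just the verification that for $x \in [0,1]$ the initial NICF constraints really do match the first three $\NFS$ bullets, which rules out any spurious ``failure'' in the digits $a_0, a_1$ that would not already force $x$ outside $[0,1]$.
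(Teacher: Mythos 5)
Your proof is correct and follows essentially the same route as the paper: the paper's own argument is literally a six-way case split on which defining condition of $\NFS$ fails, where the first three cases (bad $a_0$, wrong sign of $a_1$) yield $x<0$ or $x>1$, and the last three (some $|a_n|\ge 6$, or $a_n=\pm 5$ with $a_{n+1}$ of the same sign) each yield $|[a_n;a_{n+1},\ldots]|>\mu$ via the same one-line estimates you use. Your reorganization — first dispose of $x\notin[0,1]$, then observe that the NICF of a number in $[0,1]$ automatically satisfies the constraints on $a_0$ and on the \emph{sign} of $a_1$ — is a slightly tidier packaging of the same content, and your remark that this is just the contrapositive of the case analysis inside Proposition~\ref{findNICF} is exactly right (the paper's proof is in fact a near-verbatim copy of those six items). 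One small imprecision: it is not quite true that the second and third $\NFS$ bullets are ``satisfied for free'' when $x\in[0,1]$, since they also require $|a_1|\le 5$, which the location of $x$ does not force; but that failure is subsumed by your case~(i), so no gap results. Your extra paragraph on terminating rational expansions is a reasonable piece of bookkeeping that the paper glosses over.
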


\begin{proof}
  Take $x = [a_0;a_1,\ldots] \in \RR \setminus  (\QQ \cap \NFS)$, then, because $x \notin \NFS$, (at least) one of the following arguments holds:
  \begin{enumerate}
      \item $a_0 \notin \{0,1\}$: either $a_0 \le -1$, such that $x = a_0 + r_0 \le -\frac{1}{2} < 0$, or $a_0 \ge 2$, such that $x = a_0 + r_0 \ge \frac{3}{2} > 1$;
      \item $a_0 = 0$ and $a_{1} < 0$: $r_0 < 0$, so $x = a_0 + r_0 < 0$;
      \item $a_0 = 1$ and $a_{1} > 0$: $r_0 > 0$, so $x = a_0 + r_0 > 1$;
      \item $\exists_{1 \le i \le n}$ with $\lvert a_i \rvert \ge 6$: $\lvert [a_i;a_{i+1}, \ldots, a_n] \rvert > \lvert a_i \rvert - r_i \ge 6-\frac{1}{2} > \mu$;
      \item $\exists_{1 \le i < n}$ with $a_i = 5$ and $a_{i+1} > 0$: $r_i > 0$ and $[a_i;a_{i+1}, \ldots, a_n] > 5 > \mu$;
      \item $\exists_{1 \le i < n}$ with $a_i = -5$ and $a_{i+1} < 0$: $r_i < 0$ and $[a_i;a_{i+1}, \ldots, a_n] < -5 < -\mu$.
  \end{enumerate}
\end{proof}

\begin{theorem}\label{thm:sup}
  $\CNF \subseteq \NFS \setminus \QQ$
\end{theorem}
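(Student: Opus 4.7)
\noindent
The plan is to argue the contrapositive: for every $x \notin \NFS\setminus\QQ$ I verify that either $x\notin T_{0,1}$ or $x\in\cut{I_i}$ for some $i$, which gives $x\notin\CNF$. The hypothesis splits naturally into (I) $x\notin\NFS$ and (II) $x\in\QQ\cap\NFS$ (rationals whose NICF terminates as $[a_0;a_1,\ldots,a_N]$).

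For (I), since $x\notin\QQ\cap\NFS$, Proposition~\ref{nmu} gives either $x<0$ or $x>1$ (so $x\notin T_{0,1}$), or a smallest $n\ge 1$ with $|y_n|>\mu$, where $y_n:=[a_n;a_{n+1},\ldots]$. Set $r:=1/y_n$, so $|r|<1/\mu$, and rewrite $x=[a_0;a_1,\ldots,a_{n-2}:a_{n-1}+r]$. When $n\ge 2$, minimality of $n$ forces $|y_i|\le\mu$ for $i\le n-1$, so Proposition~\ref{findNICF} gives $[a_0;a_1,\ldots,a_{n-1}]\in\NFS$ with $a_{n-1}\in\{-4,-3,-2,2,3,4\}$, and Proposition~\ref{findGap} then places $x$ in some gap. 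When $n=1$, a short case-check over $a_0\in\{0,1\}$ and $\sign(y_1)$ shows the bound $|1/y_1|<1/\mu$ already forces $x$ outside $T_{0,1}=[1/\mu,1-1/\mu]$.

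For (II), if $a_N\in\{-4,-3,-2,2,3,4\}$, Proposition~\ref{findGap} with $r=0$ puts $x$ in a gap at once. The main obstacle is the subcase $|a_N|=5$, because Proposition~\ref{findGap} excludes $a_n=\pm 5$ and Proposition~\ref{nmu} is unavailable ($x\in\NFS$). The $\NFS$ sign rules force any trailing block of $\pm 5$'s to alternate in sign; let $k$ be the smallest index such that $a_k,a_{k+1},\ldots,a_N$ is such a maximal alternating $\pm 5$ block. Writing $\mu_m:=[5;-5,5,\ldots]$ with $m$ partials, the recursion $\mu_{m+1}=5-1/\mu_m$ with fixed point $\mu$ yields by induction $\mu_m>\mu$ strictly, so $|y_k|=\mu_{N-k+1}>\mu$ and $|1/y_k|<1/\mu$. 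If $k\ge 2$, minimality of $k$ gives $a_{k-1}\in\{-4,-3,-2,2,3,4\}$ and Proposition~\ref{findGap} applied at index $k-1$ to $x=[a_0;a_1,\ldots,a_{k-2}:a_{k-1}+1/y_k]$ concludes; if $k=1$, then $x=a_0\pm1/\mu_N$, and the strict inequality $\mu_N>\mu$ places $x$ strictly outside $T_{0,1}$.
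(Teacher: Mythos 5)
Your contrapositive strategy matches the paper's, and for case (I), $x\notin\NFS$, you use exactly the paper's chain Proposition~\ref{nmu} $\to$ Proposition~\ref{findNICF} $\to$ Proposition~\ref{findGap}, reducing to a smallest index whose tail exceeds $\mu$. The genuine divergence is in case (II), $x\in\QQ\cap\NFS$ with $|a_N|=5$: you observe (correctly) that Proposition~\ref{nmu} is inapplicable there, and instead track the maximal trailing alternating block of $\pm5$'s, proving $\mu_m>\mu$ by the monotone recursion $\mu_{m+1}=5-1/\mu_m$ so that $|1/y_k|<1/\mu$, with a separate boundary check for $k=1$. The paper avoids this bespoke analysis by treating rationals and irrationals uniformly: it picks $k$ as the smallest index with NICF remainder $|r_k|<1/\mu$ (for rationals $r_N=0$ guarantees existence), and then invokes Proposition~\ref{findNICF} --- whose hypothesis is purely about the representation $[a_0;\ldots,a_{k-1}:a_k+r_k]$ and hence remains available even when $x\in\NFS$ --- to conclude $a_k\in\{-4,-3,-2,2,3,4\}$ and $[a_0;\ldots,a_k]\in\NFS$, after which Proposition~\ref{findGap} finishes. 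Your route is correct and makes the behaviour of $\pm5$-tailed rationals very explicit; the paper's is shorter because minimality of $k$ together with Proposition~\ref{findNICF} subsumes your block-tracking. One point you use silently in case (II) with $k\ge 2$: Proposition~\ref{findGap} requires $[a_0;\ldots,a_{k-1}]\in\NFS$, which does follow from $x\in\NFS$ because all $\NFS$ constraints are conditions on adjacent pairs, but it deserves a sentence.
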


\begin{proof}
  We will prove that for every $x \in \RR$, if $x \notin \NFS \setminus \QQ$, then $x \notin I_1 = T_{0,1}$ or there exists an $i$ such that $x \in \cut{I_i}$.
  
  If $x <0$ or $x > 1$ then $x \notin I_1$, so let us assume $0 \le x \le 1$.
  First we are going to show that there exists some $n$ such that $x = [a_0;a_1,\ldots,a_{n-1}:a_n+r]$ and $\lvert r \rvert < \frac{1}{\mu}$:
  \begin{itemize}
    \item Suppose $x \in \QQ$. Then there exists $n$ such that $x = [a_0;a_1,\ldots,a_n] = [a_0;a_1,\ldots:a_n + r]$ where $r= 0$. 
    \item Suppose that $x \notin \QQ$, say $x = [a_0;a_1,\ldots]$, so $x \in \RR \setminus (\QQ \cap \NFS)$. With Theorem \ref{nmu}, there exists $n>0$ such that $\lvert[a_n;a_{n+1},a_{n+2},\ldots]\rvert > \mu$. Let $r = \frac{1}{[a_n;a_{n+1},a_{n+2},\ldots]}$, then $x = [a_0;a_1,\ldots,a_{n-2}:a_{n-1}+r]$ and $\lvert r \rvert < \frac{1}{\mu}$.
  \end{itemize}
  We can define $k$ as the smallest $n$ such that $x = [a_0;a_1,\ldots,a_{n-1}:a_n+r_n]$ with $\lvert r_n \rvert < \frac{1}{\mu}$.
  Now we know that $x = [a_0;a_1,\ldots,a_{k-1}:a_k+r_k]$ and $\lvert r_k \rvert < \frac{1}{\mu}$ and, when we represent $x$ as $[a_0;a_1,\ldots,a_{l-1}:a_l+r_l]$, because $\lvert r_l \rvert \le \frac{1}{\mu}$, we have $[a_{l+1};a_{l+2},\ldots,a_{k-1}:a_k+r_k] > \mu$.

If $k = 0$ then $x = a_0+r$, so $x < 0 + \mu$ or $x > 1 - \mu$, so $x \notin I_1$.
If $k > 0$, then with Theorem \ref{findNICF} we find $[a_0;a_1, \ldots, a_{k-1},a_k] \in \NFS$ and $a_k \in \{-4,-3,-2,2,3,4\}$. Then, from Proposition \ref{findGap}, we know that there exists an $i$ such that $x \in \cut{I_i}$.
\end{proof}

\begin{theorem} \label{nicfsum}
  For every $x \in \mathbb{R} \cap [\frac{2}{\mu},2 - \frac{2}{\mu}]$,  there exist $a,b \in \CNF$ such that $a+b = x$.
\end{theorem}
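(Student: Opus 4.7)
The goal is precisely to show $\CNF+\CNF\supseteq T_{0,1}+T_{0,1}=[\frac{2}{\mu},2-\frac{2}{\mu}]$, and this follows if we prove equality. The natural strategy is a single application of Theorem \ref{usefulCantor} with $n=2$ and ${\cal C}_1={\cal C}_2=\CNF$.

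From the analysis of $T_{0,1}$ in Section \ref{sec:nicf} together with Table \ref{table:gaps}, the smallest density ratio appearing at any split of $\CNF$ is bounded below by $1.06122$, so $\dratiow_{\CNF}\geq 1.06122>1$. Hence we may take $x_1=x_2=1$, which satisfies $x_i\leq\dratiow_{{\cal C}_i}$. The hypothesis $\sum_{i=1}^{2}\frac{x_i}{x_i+1}\geq 1$ of Theorem \ref{usefulCantor} then reduces to $\frac{1}{2}+\frac{1}{2}\geq 1$ and holds with equality. The second hypothesis
$$\lng{I_1^j}\geq \frac{x_j}{x_j+1}\cdot\frac{x_k+1}{2x_k+1}\,\lng{I_1^k}=\frac{1}{3}\lng{I_1^k}$$
is trivially satisfied since $I_1^1=I_1^2=T_{0,1}$.

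Applying Theorem \ref{usefulCantor} therefore gives
$$\{a+b:a,b\in\CNF\}\;=\;I_1^1+I_1^2\;=\;T_{0,1}+T_{0,1}\;=\;\left[\tfrac{2}{\mu},\,2-\tfrac{2}{\mu}\right],$$
and every $x\in[\tfrac{2}{\mu},2-\tfrac{2}{\mu}]$ can thus be written as $a+b$ with $a,b\in\CNF$. There is no real obstacle beyond verifying that the density-ratio lower bound $>1$ recorded in Table \ref{table:gaps} is correct; given that, the proof collapses to the trivial check of the two hypotheses above.
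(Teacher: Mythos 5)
Your proof is correct and follows essentially the same route as the paper: both invoke Theorem \ref{usefulCantor} with $n=2$, using the density-ratio bound $\dratiow_{\CNF}>1$ (so one may take $x_1=x_2=1$, making $\sum\frac{x_i}{x_i+1}=1$) and the fact that the two initial intervals coincide, so the second hypothesis is trivial. The paper states this more tersely but the reasoning is identical.
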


\begin{proof}
$\CNF$ is a Cantor set with initial interval $[\frac{1}{\mu},1-\frac{1}{\mu}]$, and has a density ratio bigger than 1.

We can now use Theorem \ref{usefulCantor}, as $\frac{x}{x+1} \ge \frac{1}{2}$ if $x \ge 1$, and $\lng{\itvb{1}{1}} = \lng{\itvb{2}{1}}$
\end{proof}

\begin{theorem} 
  For every $x \in \mathbb{R} \cap [\frac{1}{2},\frac{3}{2}]$, there exist $a,b \in \NF$ such that $a + b = x$.
\end{theorem}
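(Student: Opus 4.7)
The plan is to deduce this almost immediately from Theorem~\ref{nicfsum}, by combining two easy observations: that the target interval $[\tfrac{1}{2},\tfrac{3}{2}]$ sits inside the interval $[\tfrac{2}{\mu},2-\tfrac{2}{\mu}]$ to which that theorem already applies, and that $\CNF$ is contained in $\NF$.

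First I would verify the inclusion $[\tfrac{1}{2},\tfrac{3}{2}] \subseteq [\tfrac{2}{\mu},2-\tfrac{2}{\mu}]$. Since $\mu = \tfrac{5+\sqrt{21}}{2} > 4$, we get $\tfrac{2}{\mu} < \tfrac{1}{2}$, and by symmetry $2-\tfrac{2}{\mu} > \tfrac{3}{2}$. Thus for any $x\in[\tfrac{1}{2},\tfrac{3}{2}]$, Theorem~\ref{nicfsum} supplies $a,b \in \CNF$ with $a+b=x$.

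Next I would argue $\CNF \subseteq \NF$. By Theorem~\ref{thm:sub} and Theorem~\ref{thm:sup} we have $\CNF = \NFS\setminus\QQ$, and by Definition~\ref{def:nicfstar} the set $\NFS$ is contained in $\NF \cap [0,1]$. Therefore $a,b \in \NFS \setminus \QQ \subseteq \NF$, which is the required conclusion.

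Since the work has essentially all been done in the prior sections, I do not expect any obstacle: this statement is a direct corollary of Theorem~\ref{nicfsum} together with the identification $\CNF = \NFS\setminus\QQ\subseteq\NF$. The only thing one really has to check by hand is the numerical inequality $\mu\geq 4$, which is immediate.
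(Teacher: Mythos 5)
Your argument matches the paper's proof exactly: both invoke Theorem~\ref{nicfsum} after checking $[\tfrac{1}{2},\tfrac{3}{2}]\subseteq[\tfrac{2}{\mu},2-\tfrac{2}{\mu}]$, and then pass from $\CNF=\NFS\setminus\QQ$ to $\NF$. The proposal is correct and takes essentially the same route.
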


\begin{proof}
  Because $[\frac{1}{2},\frac{3}{2}] \subset [\frac{2}{\mu},2 - \frac{2}{\mu}]$, we can apply Theorem \ref{nicfsum}, so there exist $a,b \in \CNF$ such that $a+b = x$. Because $\CNF = \NFS \setminus \QQ$ and $(\NFS \setminus \QQ) \subset \NF$, we have $a,b \in \NF$.
\end{proof}

Now we can prove our main result.

\begin{proof}[Proof of Theorem \ref{maintheorem}]
  We can write $x$ as $y + n$ with $n \in \ZZ$ and $y \in [\frac{1}{2},\frac{3}{2}]$. We know that there exist $a,b \in \NF$ such that $a+b = y$, so $(a+n)+b = x$, with $a+n$ and $b$ in $\NF$.
\end{proof}

\section{$\NFR + \NFR \neq \RR$}\label{sec:not4}

In this chapter, we give a counterexample to $\NFR + \NFR = \RR$.
We will make use of the properties of $\NFR$ similar to
those of $\NF$, following Definition \ref{def:nicfr}.

We denote by $\NFRY$ and $\NFRX$:
\[ \NFRY = \NFR \cap \ [-\frac{1}{2},\frac{1}{2}) \cap \, \QQ\]
and
\[\NFRX = \NFR \cap \ [-\frac{1}{2},\frac{1}{2}) \setminus \QQ,\]
and we will show that:
\begin{enumerate}
    \item $(\NFRX + \NFRX)$ has empty intersection with 
$$[-0.627705, -0.627695] \cup [0.372295 , 0.372305];$$
    \item $\NFRY + \NFRY$ is a countable set;
    \item $\NFRX + \NFRY$ has Lebesgue measure 0.
\end{enumerate}

\begin{lemma}
The smallest positive real number in $\NFRX$ is $[0;\overline{4,2}]$, and the
largest is $[0;\overline{2,4}]$.
\end{lemma}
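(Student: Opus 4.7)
The plan is to analyse the \NCF\ expansion $x = [0; a_1, a_2, \ldots]$ of an arbitrary irrational $x \in \NFRX$; the constraints $x \in [-\tfrac{1}{2},\tfrac{1}{2})$ and $|a_i|\le 4$ force $a_0 = 0$ and $a_i \in \{-4,-3,-2,2,3,4\}$ obeying the standard \NCF\ rule. If $x > 0$ then $x = 1/T$ with $T := [a_1; a_2, \ldots]$ a positive \NCF\ ``tail''. The smallest positive element of $\NFRX$ then corresponds to the largest positive tail, and the largest element of $\NFRX$ (which must be positive, since $\NFRX$ already contains positive numbers) corresponds to the smallest positive tail. Thus the lemma reduces to computing the supremum $M$ and infimum $m$ of positive \NCF\ tails with $|a_i|\le 4$.

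For $M$ I would run a self-similarity argument. Writing $T = a_1 + 1/[a_2; a_3, \ldots]$ and observing that a negative subtail forces $T < a_1 \le 4$, while the candidate $[\overline{4,2}]$ already exceeds $4$, any extremal sequence has every subtail positive; monotonicity then dictates $a_1 = 4$, and maximising $T$ amounts to minimising the positive subtail $[a_2;a_3,\ldots]$. A brief case check on $a_2$ (the negative options give a negative subtail, while $a_2 \in \{3,4\}$ keeps $[a_2;a_3,\ldots]\ge 5/2$) forces $a_2 = 2$; the \NCF\ rule then requires $a_3 \ge 2$, and we are back to maximising the positive tail $[a_3;a_4,\ldots]$. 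This self-similarity yields the fixed-point equation
\[ M \;=\; 4 + \frac{1}{2 + 1/M}, \]
i.e.\ $M^2 - 4M - 2 = 0$, with positive root $M = 2 + \sqrt{6}$. The supremum is realised by the periodic expansion $[\overline{4,2}]$ (a valid \NCF, since every digit $2$ is followed by $4\ge 2$) and is irrational, so the smallest positive element of $\NFRX$ is $1/M = (\sqrt{6}-2)/2 = [0;\overline{4,2}]$.

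The argument for $m$ is dual: the same sign analysis, together with the observation that $a_1 \in \{3,4\}$ already forces $T \ge 5/2 > 2 + 1/M$, pins down $a_1 = 2$ and hence $a_2 \ge 2$, so minimising $T = 2 + 1/[a_2;a_3,\ldots]$ means maximising the positive subtail, which by the first step equals $M$. Therefore $m = 2 + 1/M = (2+\sqrt{6})/2$, attained by $[\overline{2,4}]$, and the largest element of $\NFRX$ is $1/m = \sqrt{6} - 2 = [0;\overline{2,4}]$. The main obstacle is the sign-elimination step: one must verify that no mixed-sign tail can beat the all-positive extremal candidates. This follows from the uniform estimate $|1/[a_i; a_{i+1}, \ldots]|\le 1/2$, which in each relevant subcase keeps the resulting value safely away from $M$ and $m$; writing out these subcases cleanly is the only place where care is needed.
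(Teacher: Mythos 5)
Your proposal is correct and uses essentially the same idea as the paper: bound the leading digit via $\lvert r_1\rvert\le\tfrac12$ to force $a_1=4$ (resp.\ $a_1=2$) for the minimiser (resp.\ maximiser), then observe that the extremal tail is again an extremal element of the same set, which yields the coupled fixed-point relations $x=[0\colon 4+y]$, $y=[0\colon 2+x]$; you merely fold the two relations into the single equation $M=4+\frac{1}{2+1/M}$ and solve for $M=2+\sqrt6$ rather than keeping $x$ and $y$ as a pair. The one place where you are more explicit than the paper is the sign-elimination step (ruling out negative subtails and $a_1\in\{3,4\}$ by the crude bounds $T<a_1\le4<M$ and $T\ge a_1-\tfrac12\ge\tfrac52>m$), which the paper leaves implicit in its phrase ``the result follows''.
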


\begin{proof}
Let $x$ and $y$ the smallest positive and the largest number in $\NFRX$.
We know that the expansions of both will be $[0,a_1, a_2,\ldots]$ with $a_1>0$.
For such elements in $\NFRX$ we know that \[ \frac{1}{a_1 + \frac{1}{2}} \le [0,a_1,a_2,\ldots] \le \frac{1}{a_1 - \frac{1}{2}}.\] Also \[ [0,a_1,a_2,\ldots] = \frac{1}{a_1 + [0;a_2,a_3,\ldots]}, \] so $x = [0:4+y]$ and $y = [0:2+x]$.
The result follows.
\end{proof}

By $\NFR \lbrf a_1,\ldots,a_n \rbrf$ we denote the subset of $\NFRX$ in which the first $n+1$ partial quotients are equal to $0,a_1, \dots, a_n$: 
  \[\NFR \lbrf a_1,\ldots,a_n \rbrf = \{x \in \NFRX |\ \exists y:\ x = [0;a_1,\ldots,a_n:y] \}. \]

The infimum of $\NFR \lbrf a_1,\ldots,a_n \rbrf$ is 
\begin{align*}
	[0;a_1,\ldots,a_n,\overline{4,2}]=[0;a_1,\ldots,a_{n-1}, \overline{2,4}]&\quad \text{ if } a_n = 2;\\
    [0;a_1,\ldots,a_n,\overline{-2,-4}] &\quad \text{ otherwise}.
\end{align*}

The supremum of $\NFR \lbrf a_1,\ldots,a_n \rbrf$ is 
\begin{align*}
	[0;a_1,\ldots,a_n,\overline{-4,-2}]=[0;a_1,\ldots,a_{n-1},\overline{-2,-4}]&\quad \text{ if } a_n = -2;\\
    [0;a_1,\ldots,a_n,\overline{2,4}] &\quad \text{ otherwise}.
\end{align*}

\begin{table}[ht]\tiny
\centering
    \begin{tabular}{rllc}
        fixed coeff.\ \  & \ \ minimum & \ \ maximum & covering interval \\
        $\lbrf-4,-4, -4\rbrf$ & $[0;-4,-4,-4,\overline{2,4}]$ & $[0;-4,-4,-4,\overline{-2,-4}]$ & $[-0.23621, -0.23603] $\\
        $\lbrf-4,-4, -3\rbrf$ & $[0;-4,-4,-3,\overline{2,4}]$ & $[0;-4,-4,-3,\overline{-2,-4}]$ & $[ -0.23654, -0.23623] $\\
        $\lbrf-4,-4, -2\rbrf$ & $[0;-4,-4,\overline{-2,-4}]$ & $[0;-4,-4,-2,\overline{-2,-4}]$ & $[-0.23671, -0.23658] $\\
        $\lbrf-4,-4, 4\rbrf$ & $[0;-4,-4,4,\overline{2,4}]$ & $[0;-4,-4,4,\overline{-2,-4}]$ & $[-0.23448, -0.23425] $\\
        $\lbrf-4,-4, 3\rbrf$ & $[0;-4,-4,3,\overline{2,4}]$ & $[0;-4,-4,3,\overline{-2,-4}]$ & $[-0.23422, -0.23379] $\\
        $\lbrf-4,-4, 2\rbrf$ & $[0;-4,-4,2,\overline{2,4}]$ & $[0;-4,-4,\overline{2,4}]$ & $[-0.23374, -0.23355] $\\
        $\lbrf-4,-3\rbrf$ & $[0;-4,-3,\overline{-2,-4}]$ & $[0;-4,-3,\overline{2,4}]$ & $[-0.23311, -0.22768] $\\
        $\lbrf-4,-2\rbrf$ & $[0;-4,-2,\overline{-2,-4}]$ & $[0;-4,\overline{-2,-4}]$ & $[-0.22685, -0.22474] $\\
        $\lbrf-4, 2\rbrf$ & $[0;-4,\overline{ 2, 4}]$ & $[0;-4,2,\overline{2,4}]$ & $[-0.28165, -0.27841] $\\
        $\lbrf-4, 3\rbrf$ & $[0;-4, 3,\overline{-2,-4}]$ & $[0;-4, 3,\overline{2,4}]$ & $[-0.27717, -0.26953] $\\
        $\lbrf-4, 4, -4\rbrf$ & $[0;-4,4,-4,\overline{2,4}]$ & $[0;-4,4,-4,\overline{-2,-4}]$ & $[-0.26803, -0.26772] $\\
        $\lbrf-4, 4, -3\rbrf$ & $[0;-4,4,-3,\overline{2,4}]$ & $[0;-4,4,-3,\overline{-2,-4}]$ & $[ -0.26862, -0.26806] $\\
        $\lbrf-4, 4, -2\rbrf$ & $[0;-4,4,\overline{-2,-4}]$ & $[0;-4,4,-2,\overline{-2,-4}]$ & $[-0.26894, -0.26870] $\\
        $\lbrf-4, 4, 2\rbrf$ & $[0;-4,4,2,\overline{2,4}]$ & $[0;-4,4,\overline{2,4}]$ & $[-0.26504, -0.26488] $\\
        $\lbrf-4, 4, 3\rbrf$ & $[0;-4,4,3,\overline{2,4}]$ & $[0;-4,4,3,\overline{-2,-4}]$ & $[-0.26548, -0.26508] $\\
        $\lbrf-4, 4, 4\rbrf$ & $[0;-4,4,4,\overline{2,4}]$ & $[0;-4,4,4,\overline{-2,-4}]$ & $[-0.26573, -0.26550] $\\
        $\lbrf-3,-4\rbrf$ & $[0;-3,-4,\overline{-2,-4}]$ & $[0;-3,-4,\overline{2,4}]$ & $[-0.31011, -0.30472] $\\
        $\lbrf-3,-3\rbrf$ & $[0;-3,-3,\overline{-2,-4}]$ & $[0;-3,-3,\overline{2,4}]$ & $[-0.30397, -0.29480] $\\
        $\lbrf-3,-2\rbrf$ & $[0;-3,-2,\overline{-2,-4}]$ & $[0;-3,\overline{-2,-4}]$ & $[-0.29341, -0.28989] $\\
        $\lbrf-3, 2, 2\rbrf$ & $[0;-3,2,2,\overline{2,4}]$ & $[0;-3,2,\overline{2,4}]$ & $[-0.38689, -0.38583] $\\
        $\lbrf-3, 2, 3\rbrf$ & $[0;-3,2,3,\overline{2,4}]$ & $[0;-3,2,3,\overline{-2,-4}]$ & $[-0.39013, -0.38730] $\\
        $\lbrf-3, 2, 4, -4\rbrf$ & $[0;-3,2,4,-4,\overline{-2,-4}]$ & $[0;-3,2,4,-4,\overline{2,4}]$ & $[-0.39086, -0.39073] $\\
        $\lbrf-3, 2, 4, -3\rbrf$ & $[0;-3,2,4,-3,\overline{-2,-4}]$ & $[0;-3,2,4,-3,\overline{2,4}]$ & $[-0.39072, -0.39049] $\\
        $\lbrf-3, 2, 4, -2\rbrf$ & $[0;-3,2,4,-2,\overline{-2,-4}]$ & $[0;-3,2,4,\overline{-2,-4}]$ & $[-0.39046, -0.39036] $\\
        $\lbrf-3, 2, 4, 2\rbrf$ & $[0;-3,\overline{2,4}]$ & $[0;-3,2,4,2,\overline{2,4}]$ & $[-0.39208, -0.39201] $\\
        $\lbrf-3, 2, 4, 3\rbrf$ & $[0;-3,2,4,3,\overline{-2,-4}]$ & $[0;-3,2,4,3,\overline{2,4}]$ & $[-0.39199, -0.39181] $\\
        $\lbrf-3, 2, 4, 4\rbrf$ & $[0;-3,2,4,4,\overline{-2,-4}]$ & $[0;-3,2,4,4,\overline{2,4}]$ & $[-0.39181, -0.39170] $\\
        $\lbrf-3, 3\rbrf$ & $[0;-3,3,\overline{-2,-4}]$ & $[0;-3,3,\overline{2,4}]$ & $[-0.38345, -0.36898] $\\
        $\lbrf-3, 4, -2\rbrf$ & $[0;-3,4,\overline{-2,-4}]$ & $[0;-3,4,-2,\overline{-2,-4}]$ & $[-0.36788, -0.36743] $\\
        $\lbrf-3, 4, -3\rbrf$ & $[0;-3,4,-3,\overline{2,4}]$ & $[0;-3,4,-3,\overline{-2,-4}]$ & $[-0.36727, -0.36623] $\\
        $\lbrf-3, 4, -4\rbrf$ & $[0;-3,4,-4,\overline{2,4}]$ & $[0;-3,4,-4,\overline{-2,-4}]$ & $[-0.36616, -0.36561] $\\
        $\lbrf-3, 4, 4\rbrf$ & $[0;-3,4,4,\overline{2,4}]$ & $[0;-3,4,4,\overline{-2,-4}]$ & $[-0.36189, -0.36147] $\\
        $\lbrf-3, 4, 3\rbrf$ & $[0;-3,4,3,\overline{2,4}]$ & $[0;-3,4,3,\overline{-2,-4}]$ & $[-0.36142, -0.36070] $\\
        $\lbrf-3, 4, 2\rbrf$ & $[0;-3,4,2,\overline{2,4}]$ & $[0;-3,4,\overline{2,4}]$ & $[-0.36061, -0.36032] $\\
        $\lbrf-2,-4\rbrf$ & $[0;\overline{-2,-4}]$ & $[0;-2,-4,\overline{2,4}]$ & $[-0.44949, -0.43827] $\\
        $\lbrf-2,-3\rbrf$ & $[0;-2,-3,\overline{-2,-4}]$ & $[0;-2,-3,\overline{2,4}]$ & $[-0.43671, -0.41804] $\\
        $\lbrf-2,-2\rbrf$ & $[0;-2,-2,\overline{-2,-4}]$ & $[0;-2,\overline{-2,-4}]$ & $[-0.41524, -0.40824] $\\
        $\lbrf2\rbrf$ & $[0;2,\overline{2,4}]$ & $[0;\overline{2,4}]$ & $[0.40824, 0.44949]$\\
        $\lbrf3\rbrf$ & $[0;3,\overline{2,4}]$ & $[0;3,\overline{-2,-4}]$ & $[0.28989, 0.39208] $\\
        $\lbrf4\rbrf$ & $[0;4,\overline{2,4}]$ & $[0;4,\overline{-2,-4}]$ & $[0.22474, 0.28165] $
        \end{tabular}
        \caption{\label{NFRTABLE} Cases of $\NFR\lbrf a_1,\ldots,a_n \rbrf$ including every element of $\NFRX$}
    \end{table}

This makes it possible to determine $\NFR \lbrf a_1,\ldots,a_n \rbrf$
for all possible values of $\lbrf a_1,\ldots,a_n \rbrf$, see Table \ref{NFRTABLE}. Every element of  $\NFRX$ is included in one of these sets, and is
therefore contained in one of the covering intervals listed.

From this it is an easy calculation to find all combinations
of two covering intervals.
None overlaps with either of the intervals $[-0.627705, -0.627695]$ and $[0.372295 , 0.372305]$. We only show the combinations of intervals that are most relevant (see Tables \ref{tab:two} and \ref{tab:three}): for each combination of intervals $I,J$ there exists a sum $I'+J'$ in the list below such that $I' \ge I$ and $J' \ge J$, or $I' \le I$ and $J' \le J$.

\begin{table}[h!]\tiny
\centering
    \begin{tabular}{rcl}
    $\lbrf-4,-2\rbrf + \lbrf2\rbrf =$ &$ [-0.22685, -0.22474] + [0.40824, 0.44949]$&$= [0.18139, 0.22475]$ \\
    $\lbrf4\rbrf + \lbrf4\rbrf =$ &$ [0.22474, 0.28165] + [0.22474, 0.28165] $&$ = [0.44948, 0.56330]$ \\
    \end{tabular}
    \caption{The sums closest to $[0.372295 , 0.372305]$}
\label{tab:three}
\end{table}

\begin{table}[ht]\tiny
\centering
    \begin{tabular}{cll}
    $\lbrf-2,-4\rbrf + \lbrf4\rbrf $&$= [-0.44949, -0.43827] + [0.22474, 0.28165] $&$= [-0.22475, -0.15662] $\\
    $\lbrf-2,-2\rbrf + \lbrf-4,-2\rbrf $&$= [-0.41524, -0.40824] + [-0.22685, -0.22474] $&$= [-0.66356, -0.64278] $\\
    $\lbrf-3, 2, 4, 2\rbrf + \lbrf-4,-4, 4\rbrf $&$= [-0.39208, -0.39201] + [-0.23448, -0.23425] $&$= [-0.62656, -0.62626] $\\
    $\lbrf-3, 2, 4, 4\rbrf + \lbrf-4,-4,-4\rbrf $&$= [-0.39181, -0.39170] + [-0.23621, -0.23603] $&$= [-0.62802, -0.62773] $\\
    $\lbrf-3, 2, 4, -4\rbrf + \lbrf-4,-4,-2\rbrf $&$= [-0.39086, -0.39073] + [-0.23671, -0.23658] $&$= [-0.62757, -0.62731] $\\
    $\lbrf-3, 4, -4\rbrf + \lbrf-4, 4, 2\rbrf $&$= [-0.36616, -0.36561] + [-0.26504, -0.26488] $&$= [-0.63120, -0.63049] $\\
    $\lbrf-3, 4, 4\rbrf + \lbrf-4, 4, 4\rbrf $&$= [-0.36189, -0.36147] + [-0.26573, -0.26550] $&$= [-0.62762, -0.62697] $\\
    $\lbrf-3, 4, 2\rbrf + \lbrf-4, 4, -4\rbrf $&$= [-0.36061, -0.36032] + [-0.26803, -0.26772] $&$= [-0.62864, -0.62804] $\\
    $\lbrf-3, -4\rbrf + \lbrf-3,-4\rbrf $&$= [-0.31011, -0.30472] + [-0.31011, -0.30472] $&$= [-0.62022, -0.60944]$\\
    \end{tabular}
\caption{The sums closest to $[-0.627705, -0.627695]$}
\label{tab:two}
\end{table}

\begin{corollary} \label{intervalsnotreached}
Let $I_1=[-0.627705, -0.627695]$, $I_2=[0.372295 , 0.372305]$, then:
$$(\NFRX+\NFRX) \cap (I_1 \cup I_2) = \emptyset.$$
\end{corollary}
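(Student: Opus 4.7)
The plan is to combine Table~\ref{NFRTABLE}, which partitions $\NFRX$ into finitely many subsets $\NFR\lbrf a_1,\ldots,a_n\rbrf$ each contained in a short covering interval, with the pairwise sums listed in Tables~\ref{tab:two} and~\ref{tab:three}. Every $x \in \NFRX$ lies in exactly one of the covering intervals of Table~\ref{NFRTABLE}, so for $x,y \in \NFRX$ the sum $x+y$ lies in the Minkowski sum $C+D$ of two such covering intervals. The corollary therefore reduces to checking that no such Minkowski sum $C+D$ intersects $I_1$ or $I_2$.

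For $I_2 = [0.372295, 0.372305]$, the two Minkowski sums that come closest from below and above are recorded in Table~\ref{tab:three}: the sum $\lbrf -4,-2\rbrf + \lbrf 2\rbrf$ lies in $[0.18139, 0.22475]$, and the sum $\lbrf 4\rbrf + \lbrf 4\rbrf$ lies in $[0.44948, 0.56330]$, together leaving the open gap $(0.22475, 0.44948)$ that strictly contains $I_2$. I would then invoke the monotonicity principle stated immediately after Table~\ref{NFRTABLE}: for every pair of covering intervals $(C,D)$ there is a pair $(C',D')$ appearing in Tables~\ref{tab:two} or~\ref{tab:three} such that $C'+D'$ dominates $C+D$ coordinatewise on the side facing the target. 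Consequently every Minkowski sum $C+D$ either has right endpoint at most $0.22475$ or left endpoint at least $0.44948$, so none can meet $I_2$. The same argument, using the nine entries of Table~\ref{tab:two}, brackets $I_1 = [-0.627705, -0.627695]$ inside the gap $(-0.62802, -0.62697)$ and excludes it from $\NFRX + \NFRX$.

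The only real work is the finite verification that the listed pairs really do provide the claimed coordinatewise bracketing for every unlisted combination of covering intervals. This is pure book-keeping but requires first sorting the roughly $39$ covering intervals of Table~\ref{NFRTABLE} by their position on $\RR$ (this order is dictated by the standard NICF comparison and differs from the lexicographic order on $\lbrf a_1,\ldots,a_n\rbrf$, because of the parity-dependence of continued fraction comparisons) and then performing a monotone sweep: for each ``boundary'' pair of Tables~\ref{tab:two} and~\ref{tab:three}, one confirms that shifting either summand further away from $I_1$ or $I_2$ in the sort order only pushes the sum further away from the target. Once this finite numerical check is complete, the conclusion $(\NFRX + \NFRX) \cap (I_1 \cup I_2) = \emptyset$ follows immediately, giving Corollary~\ref{intervalsnotreached}.
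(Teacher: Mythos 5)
Your proposal follows essentially the same route as the paper: reduce $\NFRX$ to the finitely many covering intervals of Table~\ref{NFRTABLE}, observe that $\NFRX+\NFRX$ lies in the union of the pairwise Minkowski sums, and use the bracketing/domination data in Tables~\ref{tab:two} and~\ref{tab:three} to verify that every such sum misses $I_1$ and $I_2$. The paper likewise leaves the finite sweep implicit ("an easy calculation"), so your added commentary on sorting the covering intervals and checking monotonicity is a faithful unpacking rather than a different argument.
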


\begin{theorem}
  $\NFR + \NFR \neq \RR$
\end{theorem}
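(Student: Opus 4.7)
The plan is to reduce to Corollary \ref{intervalsnotreached} by first working modulo $\ZZ$. Since the NICF digit constraints defining $\NFR$ only restrict $a_i$ for $i \ge 1$, the set $\NFR$ is invariant under integer translation, so $\NFR = \ZZ + (\NFRX \cup \NFRY)$ and consequently
\[
\NFR + \NFR = \ZZ + S, \qquad S := (\NFRX + \NFRX) \cup (\NFRX + \NFRY) \cup (\NFRY + \NFRY) \subset [-1,1).
\]
Because $I_2 - 1 = I_1$ in Corollary \ref{intervalsnotreached}, for every $y \in I_2$ the only integers $n$ with $y - n \in [-1,1)$ are $n = 0$ (giving $y \in I_2$) and $n = 1$ (giving $y - 1 \in I_1$). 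Hence $y \in \NFR + \NFR$ if and only if $y \in S$ or $y - 1 \in S$, so it suffices to exhibit a single $y \in I_2$ with neither $y$ nor $y-1$ lying in $S$.

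I would then estimate each of the three summands of $S$ on $I_1 \cup I_2$ in turn. Corollary \ref{intervalsnotreached} directly yields $(\NFRX + \NFRX) \cap (I_1 \cup I_2) = \emptyset$. Since $\NFRY \subset \QQ$ is countable, the sumset $\NFRY + \NFRY$ is countable and therefore Lebesgue-null. Finally, $\NFRX + \NFRY = \bigcup_{q \in \NFRY}(\NFRX + q)$ is a countable union of translates of $\NFRX$, and $\NFRX$ itself has Lebesgue measure zero because its elements are reals with uniformly bounded NICF partial quotients, and such bounded-digit sets are classically null. This last fact can alternatively be obtained directly by repeating the Cantor construction of $\CNF$ in Section \ref{sec:nicf} with digit bound $4$ in place of $5$: the density ratio stays bounded away from $0$, so the Lebesgue measure of the $k$-th approximating union of intervals shrinks by a uniformly bounded factor at each step and vanishes in the limit. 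A countable union of null sets being null, $\NFRX + \NFRY$ is null.

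Combining these three estimates, $S \cap (I_1 \cup I_2)$ is Lebesgue-null while $I_1 \cup I_2$ has positive Lebesgue measure. Therefore almost every $y \in I_2$ satisfies both $y \notin S$ and $y - 1 \notin S$, and any such $y$ witnesses $y \notin \NFR + \NFR$, proving the theorem. The only non-bookkeeping ingredient is the measure-zero property of bounded-digit continued-fraction sets; once this is granted, the result follows from Corollary \ref{intervalsnotreached} together with elementary countability and an integer-translation argument.
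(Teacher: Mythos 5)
Your proof is correct and follows essentially the same route as the paper's: reduce modulo $\ZZ$ to $\NFR \cap [-\tfrac{1}{2}, \tfrac{1}{2})$, split the sumset into $\NFRX + \NFRX$, $\NFRX + \NFRY$, and $\NFRY + \NFRY$, invoke Corollary \ref{intervalsnotreached} for the first piece and nullity of $\NFRY$ and $\NFRX$ for the other two. Your write-up is, if anything, a bit more careful than the paper's about the $\bmod\,1$ bookkeeping, explicitly using $I_2 - 1 = I_1$ to reduce to the two shifts $n = 0, 1$; the paper compresses this into the observation that $\NFRX + \NFRX$ misses the interval $[0.372295, 0.372305]$ and therefore has measure bounded away from the full measure, which is the same content stated more tersely.
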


\begin{proof}
It will be clear (from replacing, if necessary, $x, y$ by
$x-\lfloor x \rceil,  y-\lfloor y \rceil$) that $z\in\R$
can be written as $z=x+y$ with $x, y\in \NFR$ if and only if
$z\equiv x+y\bmod 1$ with $x, y\in \NFR \cap \ [-\frac{1}{2},\frac{1}{2})$.

The subset $\NFRY$ of $\QQ$ is countable, and hence
$\NFRY + \NFRY$ has Lebesgue-measure 0.

By a standard argument for Cantor sets, also $\NFRX$ has Lebesgue-measure 0:
it is contained in sequence of finite unions of finite intervals
shrinking to Lebesgue measure zero. As a consequence, also
the set $\{x+y \mid x \in \NFRY, y \in \NFRX \}$ has Lebesgue-measure 0,
by subadditivity of the Lebesgue-measure.
By Corollary \ref{intervalsnotreached} $\NFRX+\NFRX$ misses the
interval $[0.372295 , 0.372305]$ and hence has Lebesgue measure 
at most $0.99999$.

Therefore, $\NFR + \NFR \neq \RR$.
\end{proof}

\end{document}